\newtheorem{thm}{Theorem}[section]
\newtheorem{lem}[thm]{Lemma}
\newtheorem{cor}[thm]{Corollary}
\theoremstyle{definition}			                						
\newtheorem{mydef}[thm]{Definition}
\newtheorem{rem}[thm]{Remark}
\newtheorem{ex}[thm]{Example}
\DeclareMathOperator{\diag}{diag}
\DeclareMathOperator{\circulant}{circ}
\DeclareMathOperator{\nullspace}{Null}
\newcommand{\ii}{\textup{i}}
\journal{Linear Algebra and its Applications}
\begin{document}

\begin{frontmatter}

\title{Jordan chains of $h$-cyclic matrices, II}


\author{Andrew L.~Nickerson}
\ead{andrewlewisnickerson@gmail.com}

\author[uwbaddress]{Pietro Paparella\corref{ca}}
\cortext[ca]{Corresponding author}
\ead{pietrop@uw.edu}
\ead[url]{http://faculty.washington.edu/pietrop/}
\address[uwbaddress]{Division of Engineering and Mathematics, University of Washington Bothell, Bothell, WA 98011-8246, USA}

\begin{abstract}
McDonald and Paparella [\emph{Linear Algebra Appl}.~498 (2016), 145--159] gave a necessary condition on the structure of Jordan chains of $h$-cyclic matrices. In this work, that necessary condition is shown to be sufficient. As a consequence, we provide a spectral characterization of nonsingular, $h$-cyclic matrices. In addition, we provide results for the Jordan chains corresponding to the eigenvalue zero of singular matrices. Along the way, a new characterization of \emph{circulant matrices} is given.
\end{abstract}

\begin{keyword}
digraph \sep bipartite digraph \sep cyclically $h$-partite digraph \sep circulant matrix

\MSC[2020] 15A18 \sep 15A20 \sep 15B99   
\end{keyword}

\end{frontmatter}


\section{Introduction}

A celebrated result in spectral graph theory states that a graph is bipartite if and only if the spectrum of its adjacency matrix is \emph{symmetric} (i.e., $-\lambda$ is an eigenvalue whenever $\lambda$ is). However, as noted by Nikiforov \cite[p.~3]{n2017}, a bipartite \emph{digraph} can not, in general, be characterized by its spectrum alone and restrictions on the Jordan structure of the matrix must be considered. 

Recall that a \emph{directed graph} (or \emph{digraph}, for brevity) $\Gamma = (V,E)$ consists of a finite, nonempty set $V$ of \emph{vertices}, together with a set $E \subseteq V \times V$ of \emph{arcs}. If $A$ is an $n$-by-$n$ matrix with entries over a field $\mathbb{F}$, then the \emph{digraph of $A$}, denoted by $\Gamma = \Gamma_A$, has vertex set $V = \{ 1, \dots, n \}$ and arc set $E = \{ (i, j) \in V \times V \mid a_{ij} \neq 0\}$. 

A digraph $\Gamma = ( V, E )$ is called \emph{$h$-partite} if there is a partition \(P = \{ V_1,\ldots, V_h \}\) of $V$ such that, for each arc $(i, j) \in E$, there are positive integers $k,\ell \in \{ 1, \dots, h \}$ such that $i \in V_\ell$ and $j \in V_{k}$. A digraph $\Gamma = ( V, E )$ is called \emph{cyclically $h$-partite} if there is a partition $P = \{ V_1,\dots, V_h \}$ of $V$ such that, for each arc $(i, j) \in E$, there is a positive integer $\ell \in \{ 1, \dots, h \}$ such that $i \in V_\ell$ and $j \in V_{\ell+1}$ (where, for convenience, $V_{h + 1} := V_1$). Notice that there is no distinction between a cyclically bipartite digraph and a bipartite digraph, but there is if $h > 2$. Furthermore, notwithstanding its more restrictive nature, characterizing the spectral properties of cyclically $h$-partite digraphs subsumes the case of bipartite digraphs.

A matrix $A$ is called \emph{$h$-cyclic} or \emph{cyclically $h$-partite} if $\Gamma_A$ is cyclically $h$-partite. McDonald and Paparella \cite[Lemma 4.1]{mp2016} showed that if $A$ is a nonsingular, $h$-cyclic matrix with complex entries, then a given Jordan chain corresponding to an eigenvalue $\lambda$ of $A$ determines a Jordan chain corresponding to $\lambda\omega^k$, $1 \le k \le h-1$, where $\omega:=\exp(2\pi \ii/h)$ (for full details, see Lemma \ref{lem:jchainlemma} below). As an immediate consequence, if the nonsingular Jordan block $J_p(\lambda)$ appears in any Jordan canonical form of $A$, then the nonsingular Jordan block $J_p(\lambda \omega^k)$ also appears in any Jordan canonical form of $A$, $\forall k \in \{1,\dots,h-1\}$ \cite[Corollary 4.2]{mp2016}. 

The central aim of this work is to establish a converse of this result---i.e., if a given Jordan chain corresponding to an eigenvalue $\lambda$ of a matrix $A\in \mathsf{M}_n(\mathbb{C})$ determines a Jordan chain chain for $\lambda\omega^k$, for every eigenvalue $\lambda$ of $A$, then $A$ is $h$-cyclic (see Theorem \ref{thm:jchain} below). As a consequence, we provide a spectral characterization of nonsingular, $h$-cyclic matrices. In addition, we provide results for Jordan chains corresponding to the eigenvalue zero of singular matrices and implicitly provide an algorithm on computing these chains. Along the way, we provide a new characterization of \emph{circulant matrices} and use this characterization to provide a more rigorous proof of a result by McDonald and Paparella \cite[Lemma 4.3]{mp2016}. 

\section{Notation and Background}

For $n \in \mathbb{N}$, denote by: $\langle n \rangle$ the set $\{1,\ldots,n\}$; $R(n)$ the set $\{0,1,\ldots,n-1\}$; and $\omega = \omega_n$ the complex number $\exp(2\pi\ii/n)$.

The algebra of $m$-by-$n$ matrices over a field $\mathbb{F}$ is denoted by $\mathsf{M}_{m \times n}(\mathbb{F})$; when $m = n$, the set $\mathsf{M}_{m \times n}(\mathbb{F})$ is abbreviated to $\mathsf{M}_{n}(\mathbb{F})$. If $A \in \mathsf{M}_{m \times n}(\mathbb{F})$, then the $(i,j)$-entry of $A$ is denoted by $[A]_{ij}$, $a_{ij}$, or $a_{i,j}$. 

If $A \in \mathsf{M}_n(\mathbb{C})$ and $R$, $C \subseteq \{1,\dots, n\}$, then $A(R,C)$ denotes the submatrix of $A$ whose rows and columns are indexed by $R$ and $C$, respectively. 

If $A, B \in \mathsf{M}_{m \times n}(\mathbb{F})$, then the \emph{Hadamard product of $A$ and $B$}, denoted by $A \circ B$, is the $m \times n$ matrix such that $[A \circ B]_{ij} = a_{ij} b_{ij}$.

If $\lambda \in \mathbb{F}$, then the \emph{$n$-by-$n$ Jordan block with eigenvalue $\lambda$}, denoted by $J_n(\lambda)$, is defined by 
\[ J_n(\lambda)  
=\lambda \sum_{i=1}^n E_{ii} + \sum_{i=1}^{n-1} E_{i,i+1} 
= \begin{bmatrix}
\lambda & 1         &                                   \\
        & \lambda   & 1                                 \\
        &           & \ddots    & \ddots                \\
        &           &           & \lambda   & 1         \\
        &           &           &           & \lambda 
\end{bmatrix} \in \mathsf{M}_n(\mathbb{F}), \]
where $E_{ij}$ is the $n$-by-$n$ matrix whose $(i,j)$-entry equals one and zero otherwise.

\subsection{Cyclically $h$-partite matrices}

If $\Gamma_A$ is cyclically $h$-partite with partition $P$, then $A$ is said to be \emph{h-cyclic with partition} $P$ or that $P$ \emph{describes the $h$-cyclic structure of A}. The partition $P = \{V_1,\dots, V_h\}$ is \emph{consecutive} if $V_1 = \{1,\dots, i_1\}$, $V_2 = \{i_1 + 1,\dots, i_2\}, \ldots, V_h = \{ i_{h - 1} + 1, \dots, n \}$. If $A$ is $h$-cyclic with consecutive partition $P$, then $A$ is of the form
\begin{align}
    \label{cyclic_form}
    \kbordermatrix{
            & 1         & 2         & \cdots    & h         \\
    1       &           & A_{12}    &           &           \\
    \vdots  &           &           & \ddots    &           \\
    h-1     &           &           &           & A_{h-1,h} \\
    h       & A_{h1}    &           &           &  
    },       
\end{align}
where $A_{i,i+1} = A(V_i,V_{i+1}) \in \mathsf{M}_{\vert V_i \vert\times\vert V_{i+1}\vert}(\mathbb{C})$, $\forall i \in \langle h \rangle$ \cite[p.~71]{br1991}. If $P$ is not consecutive, then there is a permutation matrix $Q$ such that $Q^\top A Q$ is $h$-cyclic with consecutive partition \cite[p.~71]{br1991}. Given that the eigenvalues of a matrix do not change with respect to permuatition similarity, hereinafter it is assumed, without loss generality, that every $h$-cyclic matrix is of the form \eqref{cyclic_form}. 

If $x \in \mathbb{C}^n$ and 
\[
x = 
\begin{bmatrix}
x_1     \\
\vdots  \\
x_h
\end{bmatrix},
\]
where $x_i \in \mathbb{C}^{\vert V_i \vert}$, then $x$ is said to be \emph{conformably partitioned with $A$ (or $P$)}.

Suppose that $x_1,\dots,x_p \in \mathbb{C}^n$. Recall that if $x_1$ is an eigenvector corresponding to $\lambda \in \mathbb{C}$ and \(Ax_i = \lambda x_i + x_{i-1}\), $1 < i \le p$, then $\{ x_1,\dots,x_p \}$ is called a \emph{Jordan chain of $A$ corresponding to $\lambda$}. Furthermore, it can be shown that if $\{ x_1, \ldots, x_p \}$ is a Jordan chain, then $\{ x_1, \ldots, x_p \}$ is linearly independent and $x_k = (A - \lambda I)^{p - k} x_p$, $1 \le k \le p$. 

The following partial products of submatrices of an $h$-cyclic matrix will be of use in the sequel. 

\begin{mydef}
\label{consecutive-product}
Let $A\in \mathsf{M}_n(\mathbb{C})$ and suppose that $A$ is of the form \eqref{cyclic_form}. For $i \in \langle h\rangle$ and $p \in \mathbb{N}$, let 
\[B_{ip} := \prod_{j=h+1-p}^{h} A_{\alpha^{j-1}(i),\alpha^{j}(i)}, \]
where $\alpha\in S_h$ is the $h$-cycle of order $h$ defined by \( \alpha(i) = i\bmod{h}+ 1\). For ease of notation, the matrix $B_{ih}$ is abbreviated to $B_i$. Notice that $B_i$ is a square matrix of order $\vert V_i \vert$.
\end{mydef}

\begin{rem}
Notice that 
\begin{align*}
    \alpha^{-1} (i) =
    \begin{cases}
    i-1,    & 1 < i \le h \\
    h,      & i = 1.
    \end{cases}
\end{align*}
and $\alpha^j$ is defined if $j < 0$ since $\alpha$ is an invertible map.
\end{rem}

If $A$ is of the form \eqref{cyclic_form}, then 
\begin{align}
A^h = 
\bigoplus_{j=1}^h B_j =
\begin{bmatrix}
B_1                         &           & \multirow{2}{*}{\LARGE 0} \\
\multirow{2}{*}{\LARGE 0}   & \ddots    &                           \\
                            &           & B_h
\end{bmatrix} 
\label{A-to-the-h}
\end{align}
(see, e.g., Brualdi and Ryser \cite[p.~73]{br1991}).

We note the following useful theorem due to Mirsky.

\begin{thm}
[Mirsky {\cite[Theorem 1]{m1966}}]
\label{thm:Minc-result}
Let $A\in \mathsf{M}_n(\mathbb{C})$ and suppose that $A$ is of the form \eqref{cyclic_form}. If $\lambda_1,\ldots,\lambda_m$ are the nonzero eigenvalues of $B_1$, then the spectrum of $A$ consists of $n-hm$ zeros and the $hm$ $h$-th roots of $\lambda_1,\ldots,\lambda_m$.
\end{thm}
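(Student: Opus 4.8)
The plan is to leverage the block-diagonal identity \eqref{A-to-the-h}, namely $A^h = \bigoplus_{j=1}^h B_j$, together with the classical fact that similar matrices have the same spectrum and that the spectrum of a direct sum is the union (with multiplicity) of the spectra of the summands. The crux of the argument is two separate observations: first, that the \emph{nonzero} eigenvalues of $A$ are precisely the $h$-th roots of the nonzero eigenvalues of $A^h$, each with the appropriate multiplicity; and second, that all the blocks $B_1,\dots,B_h$ share the same nonzero spectrum (counting multiplicity). Granting these, the $hm$ nonzero eigenvalues of $A^h = \bigoplus B_j$ are the numbers $\lambda_1,\dots,\lambda_m$ each repeated $h$ times, hence $A$ has $hm$ nonzero eigenvalues consisting of the $h$-th roots of each $\lambda_i$, and the remaining $n - hm$ eigenvalues of $A$ are zero.

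For the first observation, I would argue as follows. If $\mu \neq 0$ is an eigenvalue of $A$ with algebraic multiplicity $a$, then examining the Jordan structure of $A$ shows that $\mu^h$ is an eigenvalue of $A^h$; conversely, if $\nu \neq 0$ is an eigenvalue of $A^h$, then any $h$-th root $\mu$ of $\nu$ is an eigenvalue of $A$ (since $A^h - \nu I = \prod_{k=0}^{h-1}(A - \omega^k \mu I)$ with $\omega = \omega_h$, and a product of matrices is singular iff some factor is). Tracking algebraic multiplicities through this factorization — the multiplicity of $\nu$ as an eigenvalue of $A^h$ equals the sum over the $h$ distinct $h$-th roots $\omega^k\mu$ of their multiplicities as eigenvalues of $A$ — gives the exact count. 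In particular, the total number of nonzero eigenvalues of $A$ equals the total number of nonzero eigenvalues of $A^h$, which is $hm$.

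For the second observation, that $B_1,\dots,B_h$ have a common nonzero spectrum, I would use the standard fact that for matrices $P \in \mathsf{M}_{r\times s}$ and $Q \in \mathsf{M}_{s\times r}$, the products $PQ$ and $QP$ have the same nonzero eigenvalues with the same multiplicities. Writing $B_i = A_{\alpha^{-1}(i),i}\, C$ where $C$ is the complementary partial product, a cyclic-shift argument identifies each $B_{i+1}$ with a "rotated" product that has the same nonzero spectrum as $B_i$; iterating around the $h$-cycle $\alpha$ shows all $h$ blocks agree on their nonzero spectra. Thus $\operatorname{spec}(A^h)\setminus\{0\}$ consists of each $\lambda_j$ with multiplicity $h$, and assembling the pieces yields the claim. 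The main obstacle is the careful bookkeeping of algebraic multiplicities in the first observation — ensuring that the factorization $A^h - \nu I = \prod_{k=0}^{h-1}(A-\omega^k\mu I)$ distributes multiplicities correctly and that no nonzero eigenvalue is over- or under-counted; everything else is an application of well-known spectral identities.
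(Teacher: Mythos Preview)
The paper does not supply its own proof of this theorem; it is quoted verbatim from Mirsky and used as background. So there is no in-paper argument to compare against, and your proposal stands or falls on its own.

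Your outline is largely sound, but there is a genuine gap in the ``first observation''. From the factorization
\[
A^h - \nu I \;=\; \prod_{k=0}^{h-1}\bigl(A - \omega^k \mu I\bigr),\qquad \mu^h = \nu,
\]
you correctly conclude that the algebraic multiplicity of $\nu$ in $A^h$ equals the \emph{sum} of the multiplicities of the $h$ roots $\mu,\omega\mu,\dots,\omega^{h-1}\mu$ in $A$. Knowing (from your second observation) that each $\lambda_i$ has multiplicity $h$ in $A^h$ then tells you only that these $h$ multiplicities add up to $h$; it does \emph{not} tell you that each root occurs with multiplicity exactly $1$. Nothing you have written rules out, say, $\mu$ occurring with multiplicity $h$ and the other roots not at all. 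The phrase ``everything else is an application of well-known spectral identities'' hides precisely this missing step.

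What closes the gap is the rotational symmetry of $\sigma(A)$, which must be invoked directly from the $h$-cyclic structure rather than deduced from $A^h$. Concretely, with
\[
D \;=\; \operatorname{diag}\!\bigl(I_{|V_1|},\,\omega I_{|V_2|},\,\dots,\,\omega^{h-1} I_{|V_h|}\bigr),
\]
one checks block by block that $D A D^{-1} = \omega^{-1} A$, so $A$ is similar to $\omega A$ and hence $\sigma(A) = \omega\,\sigma(A)$ as multisets. This forces the $h$ roots $\omega^k\mu$ to share a common multiplicity, and then your counting argument finishes the proof. (This similarity is essentially the mechanism behind Lemma~\ref{lem:jchainlemma} specialized to eigenvectors.) Once you insert this step, the rest of your plan --- the $PQ$/$QP$ argument for the common nonzero spectrum of the $B_j$, and the block-diagonal identity~\eqref{A-to-the-h} --- goes through.
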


\subsection{Circulant Matrices}

We digress to briefly discuss circulant matrices (for a general reference, see Davis \cite{d1979}).

\begin{mydef}
[{\cite[p.~66]{d1979}}]
    \label{circulant-def-1}
If $r  := \begin{pmatrix} r_1,  \ldots, r_n \end{pmatrix}$, where $r_i \in \mathbb{C}$, $i \in \langle n \rangle$, and $C\in \mathsf{M}_n(\mathbb{C})$, then $C$ is called a \emph{circulant} or a \emph{circulant matrix} with \emph{reference vector} $r$, denoted $\circulant(r)$, if
    \[ c_{ij} = r_{((j-i)\bmod{n}) + 1} \]
for every $(i,j) \in \langle n \rangle^2$.
\end{mydef}

\begin{mydef}
Denote by $e_i$ the $i$\textsuperscript{th} canonical basis vector of $\mathbb{C}^n$. For $n \in \mathbb{N}$, $n \ge 2$, the \emph{basic circulant of order $n$}, denoted by $K_n$, is the circulant matrix defined by $K_n = \circulant(e_2)$.

For example, 
\[ 
K_2 =
\begin{bmatrix}
0 & 1 \\
1 & 0 
\end{bmatrix},
\]
\[ 
K_3 =
\begin{bmatrix}
0 & 1 & 0 \\
0 & 0 & 1 \\
1 & 0 & 0 
\end{bmatrix}, 
\]
and, in general,  
\[ K_n =
\begin{bmatrix}
    & I_{n-1} \\
1   & 
\end{bmatrix}.
\]
\end{mydef}

The following result, although unsurprising, provides a characterization of circulant matrices that, to the best of our knowledge, has not appeared previously in the literature.

\begin{thm} 
    \label{circ-char}
If $C \in \mathsf{M}_n(\mathbb{C})$, then $C$ is a circulant matrix if and only if $c_{i_1,j_1} = c_{i_2,j_2}$ for all $(i_1,j_1) \sim (i_2,j_2)$, where $\sim$ is the equivalence relation defined by 
    \begin{align*}
        (i_1,j_1) \sim (i_2,j_2) \iff j_1-i_1 \equiv (j_2-i_2)\bmod{n}. 
    \end{align*}
\end{thm}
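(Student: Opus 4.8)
The plan is to prove both directions directly from Definition \ref{circulant-def-1}, exploiting the fact that the map $(i,j) \mapsto (j-i) \bmod n$ is constant on $\sim$-classes and takes all values in $R(n)$.

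\medskip

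\noindent\textbf{Necessity.} Suppose $C = \circulant(r)$. If $(i_1,j_1) \sim (i_2,j_2)$, then $j_1 - i_1 \equiv j_2 - i_2 \pmod n$, so $((j_1-i_1)\bmod n) + 1 = ((j_2-i_2)\bmod n) + 1$, and hence
\[
c_{i_1,j_1} = r_{((j_1-i_1)\bmod n)+1} = r_{((j_2-i_2)\bmod n)+1} = c_{i_2,j_2}.
\]

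\medskip

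\noindent\textbf{Sufficiency.} Conversely, suppose $c_{i_1,j_1} = c_{i_2,j_2}$ whenever $(i_1,j_1)\sim(i_2,j_2)$. Define a vector $r = (r_1,\dots,r_n)$ by $r_{k+1} := c_{1,\,(k \bmod n)+1}$ for $k \in R(n)$; equivalently, $r_{k+1}$ is the common value of $c_{ij}$ over all $(i,j)$ with $j - i \equiv k \pmod n$, which is well-defined precisely by the hypothesis and the observation that each residue class modulo $n$ is nonempty (e.g., it contains $(1, (k \bmod n)+1)$). It then remains to check that $c_{ij} = r_{((j-i)\bmod n)+1}$ for every $(i,j) \in \langle n\rangle^2$: indeed, setting $k = (j-i)\bmod n$, the pair $(i,j)$ satisfies $j - i \equiv k \pmod n$, so $(i,j) \sim (1,(k\bmod n)+1)$ and therefore $c_{ij} = c_{1,(k\bmod n)+1} = r_{k+1} = r_{((j-i)\bmod n)+1}$. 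Hence $C = \circulant(r)$.

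\medskip

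\noindent There is essentially no obstacle here; the only point requiring care is confirming that $\sim$ is genuinely an equivalence relation (reflexivity, symmetry, and transitivity all follow immediately from the corresponding properties of congruence modulo $n$) and that the indexing bookkeeping with the ``$+1$'' shift between the residue set $R(n)$ and the index set $\langle n \rangle$ is handled consistently. I would state the well-definedness of $r$ explicitly, since that is the conceptual crux: the hypothesis is exactly what guarantees that ``the entry of $C$ on a given diagonal (wrapped cyclically)'' is a well-defined function of the diagonal.
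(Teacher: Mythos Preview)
Your proof is correct and follows essentially the same approach as the paper: both directions are handled directly from Definition~\ref{circulant-def-1}, and in the sufficiency direction you define the reference vector from the first row (your $r_{k+1} := c_{1,(k\bmod n)+1}$ is exactly the paper's $r_k := c_{1k}$ after reindexing) and then verify $(i,j)\sim(1,m+1)$ with $m=(j-i)\bmod n$. The only differences are cosmetic: you add explicit remarks on well-definedness and on $\sim$ being an equivalence relation, and you carry a harmless redundant ``$k\bmod n$'' for $k\in R(n)$.
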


\begin{proof}
Suppose that $C = \circulant(r)$. If $(i_1,j_1)\sim(i_2,j_2)$, then  
    \[ c_{i_1,j_1} = r_{((j_1-i_1)\bmod{n}) + 1} = r_{((j_2-i_2)\bmod{n}) + 1} = c_{i_2,j_2}, \]
as desired.

Conversely, suppose that $c_{i_1,j_1} = c_{i_2,j_2}$ for all $(i_1,j_1) \sim (i_2,j_2)$ and let 
\[ r_k := c_{1k},~\forall k \in \langle n \rangle. \] 
If $(i,j) \in \langle n \rangle^2$ and $m := (j-i)\bmod{n}$, then 
    \[ (j-i)\bmod{n} = m\bmod{n} = (m+1-1)\bmod{n}, \]
i.e., $(i,j)\sim(1,m+1)$. Thus,
    \[ c_{ij} = c_{1,m+1} = r_{m+1} = r_{((j-i)\bmod{n})+1} \]
as desired.
\end{proof}

\section{Nonsingular $h$-cyclic Matrices}

We briefly digress to address some properties of nonsingular, $h$-cyclic matrices.

\begin{thm}
\label{equal-size-partition-sets}
Let $A\in \mathsf{M}_n(\mathbb{C})$ be $h$-cyclic and nonsingular. If $P = \{ V_1,\dots, V_h \}$ describes the $h$-cyclic structure of $A$, then $\vert V_i \vert = \vert V_j \vert$, $\forall i,j \in \langle h\rangle$. 
\end{thm}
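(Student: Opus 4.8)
The plan is to use the block structure of $A^h$ together with Mirsky's theorem (Theorem~\ref{thm:Minc-result}) and the invertibility hypothesis. Since $A$ is nonsingular, so is $A^h$, and by \eqref{A-to-the-h} we have $A^h = \bigoplus_{j=1}^h B_j$, so every block $B_j$ is nonsingular. In particular $B_1$ has exactly $m = \vert V_1\vert$ nonzero eigenvalues. By Theorem~\ref{thm:Minc-result}, the spectrum of $A$ then consists of $n - h\vert V_1\vert$ zeros together with $h\vert V_1\vert$ nonzero values (the $h$-th roots of the eigenvalues of $B_1$). But $A$ is nonsingular, so it has no zero eigenvalues; hence $n - h\vert V_1\vert = 0$, i.e. $n = h\vert V_1\vert$.

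Next I would observe that the same argument applies with the roles of the blocks permuted. Indeed, for each $i \in \langle h\rangle$, the matrix $B_i$ is a product of the same factors $A_{12}, A_{23},\dots,A_{h1}$ taken in a cyclically shifted order, so $B_i$ and $B_1$ have the same nonzero eigenvalues (with multiplicities); more directly, one can simply note that $B_i$ is a principal submatrix of the nonsingular matrix $A^h$ and is therefore nonsingular, so $\operatorname{rank} B_i = \vert V_i\vert$. Alternatively, cyclic permutation similarity of $A$ (conjugating by a suitable permutation matrix that cyclically relabels the blocks $V_1,\dots,V_h$) turns $B_1$ into $B_i$ up to similarity, so $\vert V_1\vert = \vert V_i\vert$ would follow from the first paragraph applied to the relabeled matrix. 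Either way, $B_i$ nonsingular forces the number of nonzero eigenvalues of $B_i$ to be $\vert V_i\vert$.

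To finish, combine $n = \sum_{i=1}^h \vert V_i\vert$ with $n = h\vert V_1\vert$. Since each $B_i$ is nonsingular and $B_i$ is a product $A_{\alpha^{h-1}(i),i}\cdots A_{i,\alpha(i)}$ of rectangular matrices whose sizes chain around as $\vert V_i\vert \times \vert V_{\alpha(i)}\vert$, $\vert V_{\alpha(i)}\vert \times \vert V_{\alpha^2(i)}\vert$, and so on, nonsingularity of the product forces each factor to have full rank equal to $\min$ of its two dimensions; chaining these rank conditions around the cycle $i \to \alpha(i) \to \cdots \to i$ yields $\vert V_i\vert \le \vert V_{\alpha(i)}\vert \le \cdots \le \vert V_{\alpha^{h}(i)}\vert = \vert V_i\vert$, so all the $\vert V_j\vert$ are equal. (This rank-chaining is the cleanest self-contained route; it does not even need Mirsky's theorem, only that $A^h$ is nonsingular.)

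The main obstacle is purely bookkeeping: making the rank-chaining argument around the $h$-cycle $\alpha$ precise, i.e. showing carefully that if $XY$ is invertible and $X \in \mathsf{M}_{p\times q}$, $Y\in \mathsf{M}_{q\times p}$ then $p \le q$, and then iterating this correctly around the partial products $B_i$ so that the inequalities close up into equalities. There is no deep difficulty — the only things to be careful about are the indexing conventions for $\alpha$ and $B_{ip}$ from Definition~\ref{consecutive-product} and ensuring the cyclic chain of size inequalities genuinely returns to its starting point after $h$ steps.
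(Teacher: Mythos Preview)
Your proposal is correct, and the rank-chaining argument you identify as the ``cleanest self-contained route'' in your third paragraph is exactly the paper's approach: the paper argues by contradiction that if $\vert V_j\vert > \vert V_i\vert$ then $\rank(B_j) \le \min_k \rank(A_{k,k+1}) \le \vert V_i\vert < \vert V_j\vert$, so $B_j$ (and hence $A^h$, hence $A$) is singular. Your detour through Mirsky's theorem in the first paragraph is unnecessary but harmless, and your phrasing ``each factor has full rank'' is slightly imprecise (what you actually need and use is $\rank(B_i) \le \min_k \rank(A_{\alpha^{k-1}(i),\alpha^k(i)}) \le \min_k \vert V_k\vert$), but the substance is the same.
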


\begin{proof} 
For contradiction, suppose that $\vert V_i \vert \ne \vert V_j \vert$. Without losing generality, it may be assumed that $\vert V_j \vert > \vert V_i \vert$. Notice that $B_j \in \mathsf{M}_{\vert V_j \vert}(\mathbb{C})$ and 
\begin{align*}
    \rank (B_j) 
    &\leq \rank \left( \prod_{k=1}^{h} A_{\alpha^{k-1}(j),\alpha^{k}(j)} \right)                \\
    &\leq \min \left\{ \rank{(A_{12})}, \ldots, \rank{(A_{h-1,h})},\rank{(A_{h,1})} \right\}    \\
    & \leq \vert V_{i}\vert                                                                     \\
    &< \vert V_j \vert,         
\end{align*} 
i.e., $B_j$ is rank-deficient. Thus, $0\in\sigma(B_j)$, but \[0 \in \sigma(B_j) \implies 0 \in \sigma(A^h) \iff 0 \in \sigma(A)\] which contradicts the assumption that $A$ is non-singular.
\end{proof}

\begin{cor}
If $A\in \mathsf{M}_n(\mathbb{C})$ is $h$-cyclic and nonsingular, then $h$ divides $n$.
\end{cor}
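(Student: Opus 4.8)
The plan is to derive this as an immediate consequence of Theorem \ref{equal-size-partition-sets}. Since $A$ is $h$-cyclic and nonsingular, let $P = \{V_1, \dots, V_h\}$ be a partition describing its $h$-cyclic structure. By Theorem \ref{equal-size-partition-sets}, all the blocks have a common cardinality; call it $m := |V_1| = \dots = |V_h|$. First I would observe that the $V_i$ partition $\langle n \rangle$, so $n = \sum_{i=1}^h |V_i| = hm$, which exhibits $h$ as a divisor of $n$.

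The only subtlety worth addressing is that $P$ need not be the consecutive partition, but this causes no trouble: the definition of a partition already guarantees the $V_i$ are pairwise disjoint with union $\langle n \rangle$, so the cardinality count $n = \sum |V_i|$ holds regardless of the ordering of indices within each block. Alternatively, one could first pass to the permutation-similar matrix $Q^\top A Q$ of the form \eqref{cyclic_form} as discussed after \eqref{cyclic_form} — this preserves both nonsingularity and the $h$-cyclic structure with a consecutive partition of the same block sizes — but invoking that reduction is not strictly necessary for the counting argument.

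There is essentially no obstacle here; the corollary is a one-line consequence once Theorem \ref{equal-size-partition-sets} is in hand. The proof reads: by Theorem \ref{equal-size-partition-sets} there is an $m \in \mathbb{N}$ with $|V_i| = m$ for all $i \in \langle h \rangle$, whence $n = |V_1| + \dots + |V_h| = hm$, so $h \mid n$.
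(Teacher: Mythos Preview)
Your proposal is correct and matches the paper's approach: the corollary is stated without proof precisely because it is immediate from Theorem~\ref{equal-size-partition-sets}, via the cardinality count $n = \sum_{i=1}^h |V_i| = hm$ that you give.
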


\begin{rem}
The converse of Theorem \ref{equal-size-partition-sets} does not hold; indeed, consider the singular bipartite matrix 
\[
\begin{bmatrix} 
0 & 0 & 1 & 1 \\
0 & 0 & 1 & 1 \\
1 & 1 & 0 & 0 \\
1 & 1 & 0 & 0 
\end{bmatrix}. \] 
\end{rem}

\section{Main Results}

Given that it will be used in several of the subsequent results, hereinafter, $\alpha_{ij} := (i-j)\bmod{h}$, $\forall i,j \in \mathbb{Z}$ and $\forall h \in\mathbb{N}$. 

The following result was established by McDonald and Paparella \cite[Lemma 4.1]{mp2016} for nonsingular matrices; however, examining the proof reveals that the supposition is unnecessary.

\begin{lem} 
[{\cite[Lemma 4.1]{mp2016}}]
\label{lem:jchainlemma}
Let $A \in \mathsf{M}_n(\mathbb{C})$ and suppose that $A$ is of the form \eqref{cyclic_form}.  
\begin{enumerate}
\item If $\left\{ x_{\langle 0, j \rangle} \right\}_{j=1}^p$ is a right Jordan chain corresponding to $\lambda \in \sigma(A)$, where $x_{\left< 0, j \right>}$ is partitioned conformably with respect to  $A$ as 
\begin{align*}
x_{\left< 0, j \right>} = 
\begin{bmatrix} 
x_{1j} \\ 
\vdots \\ 
x_{hj} \end{bmatrix},~j \in \langle p \rangle,
\end{align*}
then, for $k \in R(h)$, the set 
\begin{align*}
\left\{
x_{\left< k, j \right>} :=
\begin{bmatrix} 
(\omega^k)^{\alpha_{1j}} x_{1j} \\ 
\vdots \\ 
(\omega^k)^{\alpha_{hj}} x_{hj} 
\end{bmatrix} 
\right\}_{j=1}^p
\end{align*}
is a right Jordan chain corresponding to $\lambda \omega^k$.
\item If $\left\{ y_{\left< j, 0 \right>} \right\}_{j = 1}^p$ is a left Jordan chain corresponding to $\lambda \in \sigma(A)$, where $y_{\left< j , 0 \right>}$ is partitioned conformably with respect to  $A$ as 
\begin{align*}
y_{\left< j , 0 \right>}^\top = 
\begin{bmatrix} 
y_{j1}^\top & 
\cdots & 
y_{jh}^\top \end{bmatrix},~j \in \langle p \rangle,
\end{align*}
then, for $k \in R(h)$, the set
\begin{align*}
\left\{  
y_{\left< j , k \right>}^\top :=
\begin{bmatrix} 
(\omega^k)^{\alpha_{j1}} y_{j1}^\top & 
\cdots & 
(\omega^k)^{\alpha_{jh}} y_{jh}^\top 
\end{bmatrix} \right\}_{j = 1}^p 										
\end{align*}
is a left Jordan chain corresponding to $\lambda \omega^k$.
\end{enumerate}
\end{lem}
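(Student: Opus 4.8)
The plan is to prove part (1) by exhibiting the new chain as a fixed invertible rescaling of the old one, and then to obtain part (2) by transposing the same computation. The one arithmetic fact used throughout is that $(\omega^k)^h = (\omega^h)^k = 1$, so a power $(\omega^k)^m$ depends only on $m$ modulo $h$; in particular $(\omega^k)^{\alpha_{ij}} = (\omega^k)^{i-j}$ for all integers $i,j$.

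For part (1), set $D := \bigoplus_{i=1}^h (\omega^k)^i I_{|V_i|}$, an invertible matrix partitioned conformably with $A$. Because $A$ has the form \eqref{cyclic_form}, a block-by-block check shows $AD = \omega^k DA$; the only block for which this is not immediate is the ``wrap-around'' block $A_{h,1}$, where it reduces to $(\omega^k)^h = 1$, and this is the single place where it matters that $\omega$ is a primitive $h$-th root of unity. Unwinding the definitions and using $h$-periodicity, block $i$ of $x_{\langle k,j\rangle}$ is $(\omega^k)^{i-j}x_{ij} = (\omega^k)^{-j}(\omega^k)^i x_{ij}$, so
\[
x_{\langle k,j\rangle} = (\omega^k)^{-j}\, D\, x_{\langle 0,j\rangle}, \qquad j \in \langle p \rangle .
\]

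With the convention $x_{\langle 0,0\rangle} := 0$, the hypothesis is that $A x_{\langle 0,j\rangle} = \lambda x_{\langle 0,j\rangle} + x_{\langle 0,j-1\rangle}$ for every $j \in \langle p \rangle$, with $x_{\langle 0,1\rangle} \neq 0$. Then
\[
A x_{\langle k,j\rangle} = (\omega^k)^{-j}(AD)x_{\langle 0,j\rangle} = (\omega^k)^{1-j}DA\,x_{\langle 0,j\rangle} = (\omega^k)^{1-j} D\bigl(\lambda x_{\langle 0,j\rangle} + x_{\langle 0,j-1\rangle}\bigr),
\]
and regrouping this as $\lambda\omega^k\bigl((\omega^k)^{-j}Dx_{\langle 0,j\rangle}\bigr) + (\omega^k)^{-(j-1)}Dx_{\langle 0,j-1\rangle}$ identifies it with $\lambda\omega^k x_{\langle k,j\rangle} + x_{\langle k,j-1\rangle}$. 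Since $D$ is invertible, $x_{\langle k,1\rangle} = (\omega^k)^{-1}Dx_{\langle 0,1\rangle} \neq 0$, so $x_{\langle k,1\rangle}$ is an eigenvector and $\{x_{\langle k,j\rangle}\}_{j=1}^p$ is a Jordan chain corresponding to $\lambda\omega^k$.

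Part (2) is the transpose of this argument: from $AD = \omega^k DA$ one gets $D^{-1}A = \omega^k AD^{-1}$, and $h$-periodicity gives $y_{\langle j,k\rangle}^\top = (\omega^k)^{j}\, y_{\langle j,0\rangle}^\top D^{-1}$; substituting $y_{\langle j,k\rangle}^\top A = (\omega^k)^{j} y_{\langle j,0\rangle}^\top D^{-1}A = (\omega^k)^{j+1} y_{\langle j,0\rangle}^\top A D^{-1}$ into the defining relations of the left chain and regrouping as before yields the assertion, with the eigenvector/nonvanishing condition again immediate from invertibility of $D$. I do not expect a genuine obstacle here: the only thing to watch is the bookkeeping --- keeping block indices and exponents straight modulo $h$, and in particular not conflating $\alpha_{ij} \in R(h)$ with the integer $i-j$ (which is legitimate only inside an exponent of $\omega^k$).
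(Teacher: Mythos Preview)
Your argument is correct. The paper does not supply its own proof of this lemma; it simply quotes the statement from \cite{mp2016} and remarks that the nonsingularity hypothesis there is superfluous, so there is nothing to compare against directly. Your device of the block-diagonal intertwiner $D=\bigoplus_{i=1}^h(\omega^k)^iI_{|V_i|}$ satisfying $AD=\omega^k DA$, together with the identification $x_{\langle k,j\rangle}=(\omega^k)^{-j}Dx_{\langle 0,j\rangle}$, is the cleanest way to encode the $h$-cyclic structure and reduces part~(1) to a two-line computation; this is more conceptual than a bare block-by-block verification.

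One bookkeeping point worth making explicit in part~(2): the left-chain convention under which the stated formula is correct is $y_{\langle j,0\rangle}^\top A=\lambda\,y_{\langle j,0\rangle}^\top+y_{\langle j+1,0\rangle}^\top$ (i.e., the rows of $S^{-1}$ when $S^{-1}AS=J$, with $y_{\langle p,0\rangle}$ the left eigenvector). With that convention your identities $y_{\langle j,k\rangle}^\top=(\omega^k)^{j}y_{\langle j,0\rangle}^\top D^{-1}$ and $D^{-1}A=\omega^k AD^{-1}$ give exactly $y_{\langle j,k\rangle}^\top A=\lambda\omega^k\,y_{\langle j,k\rangle}^\top+y_{\langle j+1,k\rangle}^\top$. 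If instead one used $+\,y_{\langle j-1,0\rangle}^\top$, the regrouping would produce a stray factor $(\omega^k)^2$ and the statement itself would fail for $h\ge 3$. Your sketch is vague enough not to commit either way, but spelling out the index shift would remove any ambiguity.
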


\begin{rem}
Although McDonald and Paparella stated the aforementioned result for nonzero eigenvalues, the proof given is valid when $\lambda = 0$. However, if any Jordan canonical form of $A$ has a singular Jordan block of size $p$-by-$p$, then the result does not yield another Jordan block (it does, at least, yield a different Jordan chain for that eigenvalue).

For example, consider the $3$-cyclic matrix $A$ defined by 
\begin{align*}
A = 
\left[
\begin{array}
{*{1}{c}|*{3}{c}|*{2}{c}}
0 & 1 & 1 & 1 & 0 & 0   \\
\hline
0 & 0 & 0 & 0 & 1 & 0   \\
0 & 0 & 0 & 0 & 0 & 1   \\
0 & 0 & 0 & 0 & 1 & 1   \\
\hline
1 & 0 & 0 & 0 & 0 & 0   \\
1 & 0 & 0 & 0 & 0 & 0 
\end{array} \right].
\end{align*}
Since $B_1 = \begin{bmatrix} 4 \end{bmatrix}$, it follows that the nonzero eigenvalues of $A$ are $2^{2/3}$, $2^{2/3}\omega$, and $2^{2/3}\omega^2$. Furthermore, since
\[   
\left[
\begin{array}
{*{1}{c}|*{3}{c}|*{2}{c}}
0 & 1 & 1 & 1 & 0 & 0   \\
\hline
0 & 0 & 0 & 0 & 1 & 0   \\
0 & 0 & 0 & 0 & 0 & 1   \\
0 & 0 & 0 & 0 & 1 & 1   \\
\hline
1 & 0 & 0 & 0 & 0 & 0   \\
1 & 0 & 0 & 0 & 0 & 0 
\end{array} \right]
\left[ \begin{array}{r}
0 \\
\hline
0 \\ 
0 \\ 
0 \\
\hline 
1 \\ 
-1
\end{array} \right] 
= 
\left[ \begin{array}{r}
0 \\
\hline
1 \\ 
-1 \\ 
0 \\
\hline 
0 \\ 
0
\end{array} \right],
\]
and
\[  
\left[
\begin{array}
{*{1}{c}|*{3}{c}|*{2}{c}}
0 & 1 & 1 & 1 & 0 & 0   \\
\hline
0 & 0 & 0 & 0 & 1 & 0   \\
0 & 0 & 0 & 0 & 0 & 1   \\
0 & 0 & 0 & 0 & 1 & 1   \\
\hline
1 & 0 & 0 & 0 & 0 & 0   \\
1 & 0 & 0 & 0 & 0 & 0 
\end{array} \right]
\left[ \begin{array}{r}
0 \\
\hline
1 \\ 
-1 \\ 
0 \\
\hline 
0 \\ 
0
\end{array} \right],
\]
it follows that 
\[ 
\left\{
x_1 := \begin{bmatrix} 0 \\ 1 \\ -1 \\ 0 \\ 0 \\ 0 \end{bmatrix},\
x_2 := \begin{bmatrix} 0 \\ 0 \\ 0 \\ 0 \\ 1 \\ -1 \end{bmatrix}
\right\}
\]
is Jordan chain of length two associated with the eigenvalue zero. By Lemma \ref{lem:jchainlemma},
\[ 
\left\{
\hat{x}_1 
= 
\begin{bmatrix} 
(\omega)^{\alpha_{11}}\cdot 0       \\ 
(\omega)^{\alpha_{21}}\cdot 1       \\ 
(\omega)^{\alpha_{21}}\cdot (-1)    \\ 
(\omega)^{\alpha_{21}}\cdot 0       \\ 
(\omega)^{\alpha_{31}}\cdot 0       \\ 
(\omega)^{\alpha_{31}}\cdot 0 
\end{bmatrix} 
= \begin{bmatrix} 0 \\ \omega \\ -\omega \\ 0 \\ 0 \\ 0 \end{bmatrix},\
\hat{x}_2
= 
\begin{bmatrix} 
(\omega)^{\alpha_{12}}\cdot 0       \\ 
(\omega)^{\alpha_{22}}\cdot 0       \\ 
(\omega)^{\alpha_{22}}\cdot 0       \\ 
(\omega)^{\alpha_{22}}\cdot 0       \\ 
(\omega)^{\alpha_{32}}\cdot 1       \\ 
(\omega)^{\alpha_{32}}\cdot (-1) 
\end{bmatrix}
= \begin{bmatrix} 0 \\ 0 \\ 0 \\ 0 \\ \omega \\ -\omega \end{bmatrix}
\right\}
\]
is also a Jordan chain corresponding to zero, but any Jordan canonical form of $A$ has one Jordan block of order two and one Jordan block of order one. 
\end{rem}

The following results were presented by McDonald and Paparella \cite[Lemma 4.3 and Remark 4.4]{mp2016}. We repeat the results for the sake of completeness and to give more rigorous proofs that utilize the novel characterization of circulant matrices given in Theorem \ref{circ-char}.

\begin{lem}
[c.f.~{\cite[Lemma 4.3]{mp2016}}]
    \label{W-matrices}
If $k\in R(h)$ and $\ell \in \langle r \rangle$, then 
\begin{align*}
W_{k\ell}^1 
&:= 
\omega^k
\begin{bmatrix}
(\omega^k)^{\alpha_{1\ell}} \\
\vdots \\
(\omega^k)^{\alpha_{h\ell}}
\end{bmatrix}                                                       
\begin{bmatrix}
(\omega^k)^{\alpha_{\ell1}} & \ldots & (\omega^k)^{\alpha_{\ell h}}
\end{bmatrix}                                                       \\
&= \circulant(\omega^k, 1, (\omega^k)^{h-1}, \ldots, (\omega^k)^2)
\end{align*}
and 
\begin{align*}
W_{k\ell}^2 
&:= 
\begin{bmatrix}
(\omega^k)^{\alpha_{1\ell}} \\
\vdots \\
(\omega^k)^{\alpha_{h\ell}}
\end{bmatrix}
\begin{bmatrix}
(\omega^k)^{\alpha_{(\ell+1)1}} & \ldots & (\omega^k)^{\alpha_{(\ell+1) h}}
\end{bmatrix}                                                               \\
&= \circulant(\omega^k, 1, (\omega^k)^{h-1}, \ldots, (\omega^k)^2).    
\end{align*}
\end{lem}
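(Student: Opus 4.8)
The plan is to prove both identities by computing the $(a,b)$-entries of the rank-one matrices $W_{k\ell}^1$ and $W_{k\ell}^2$ and then invoking the characterization of circulant matrices in Theorem \ref{circ-char}. First I would fix $k \in R(h)$, write $\zeta := \omega^k$ for brevity in the scratch work, and compute directly from the outer-product definitions. For $W_{k\ell}^1$ the $(a,b)$-entry is
\[
\left[ W_{k\ell}^1 \right]_{ab} = \zeta \cdot \zeta^{\alpha_{a\ell}} \cdot \zeta^{\alpha_{\ell b}} = \zeta^{1 + \alpha_{a\ell} + \alpha_{\ell b}},
\]
and for $W_{k\ell}^2$ the $(a,b)$-entry is $\zeta^{\alpha_{a\ell} + \alpha_{(\ell+1)b}}$. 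Since $\zeta^h = 1$, each entry depends only on its exponent modulo $h$, so the key arithmetic fact is that $\alpha_{a\ell} + \alpha_{\ell b} \equiv \alpha_{ab} \bmod h$ and, similarly, $\alpha_{a\ell} + \alpha_{(\ell+1)b} = \alpha_{a\ell} + \alpha_{\ell b} - 1 \equiv \alpha_{ab} - 1 \bmod h$; these follow immediately from the definition $\alpha_{ij} = (i-j) \bmod h$ because $(a - \ell) + (\ell - b) = a - b$. Hence $\left[ W_{k\ell}^1 \right]_{ab} = \zeta^{1 + \alpha_{ab}}$ and $\left[ W_{k\ell}^2 \right]_{ab} = \zeta^{\alpha_{ab} - 1} = \zeta^{(\alpha_{ab} - 1)\bmod h}$.

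Next I would observe that in both cases the $(a,b)$-entry depends only on $\alpha_{ab} = (a - b)\bmod h$, hence only on $(b - a)\bmod h$; so if $(a_1,b_1)\sim(a_2,b_2)$ in the sense of Theorem \ref{circ-char} (with $n = h$), the corresponding entries agree. By Theorem \ref{circ-char}, both $W_{k\ell}^1$ and $W_{k\ell}^2$ are therefore circulant matrices, and it remains only to identify the reference vectors. For a circulant of order $h$ with reference vector $r = (r_1,\dots,r_h)$ one has $c_{ab} = r_{((b-a)\bmod h)+1}$, so I must express the entry as a function of $(b-a)\bmod h$. Writing $t := (b - a)\bmod h$, we have $\alpha_{ab} = (a-b)\bmod h = (-t)\bmod h$, which equals $0$ if $t = 0$ and $h - t$ if $1 \le t \le h-1$. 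Thus for $W_{k\ell}^1$: when $t = 0$, the entry is $\zeta^{1} = \zeta$; when $t = 1$, $\alpha_{ab} = h-1$ and the entry is $\zeta^{1+(h-1)} = \zeta^h = 1$; when $t = 2$, $\alpha_{ab} = h-2$ and the entry is $\zeta^{h-1} = (\zeta)^{h-1}$; and in general for $2 \le t \le h-1$ the entry is $\zeta^{h+1-t} = (\zeta)^{h+1-t}$. Reading off $r_{t+1}$ for $t = 0,1,\dots,h-1$ gives the reference vector $(\zeta, 1, \zeta^{h-1}, \zeta^{h-2}, \dots, \zeta^{2})$, matching $\circulant(\omega^k, 1, (\omega^k)^{h-1}, \ldots, (\omega^k)^2)$. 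For $W_{k\ell}^2$, the entry is $\zeta$ times the entry of $W_{k\ell}^1$ divided by $\zeta^2$... more directly, $\left[W_{k\ell}^2\right]_{ab} = \zeta^{-1}\left[W_{k\ell}^1\right]_{ab}\zeta^{-1}\cdot\zeta = \zeta^{-2}\left[W_{k\ell}^1\right]_{ab}$ is awkward; instead I would just repeat the same reading: $\left[W_{k\ell}^2\right]_{ab} = \zeta^{\alpha_{ab}-1}$, so when $t = 0$ it is $\zeta^{-1} = \zeta^{h-1}$, when $t=1$ it is $\zeta^{h-2}$, \dots — wait, this must also come out to $(\omega^k,1,(\omega^k)^{h-1},\dots,(\omega^k)^2)$, so the cleanest route is to note $\alpha_{(\ell+1)b} \equiv \alpha_{\ell b} - 1$ shifts the column index, equivalently $W_{k\ell}^2 = W_{k\ell}^1 K_h^{-1}$ up to the scalar $\omega^k$, and a column-shift of a circulant by $K_h$ is again a circulant with cyclically permuted reference vector; tracking that permutation yields the same vector. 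I would present whichever of these two bookkeeping arguments is shorter.

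The main obstacle is purely the indexing and modular bookkeeping: getting the direction of the shift right (left versus right multiplication by a power of $K_h$, equivalently whether the reference vector is cyclically rotated forward or backward), and confirming that the two seemingly different outer products $W_{k\ell}^1$ and $W_{k\ell}^2$ genuinely produce the \emph{same} reference vector — which is the substantive content of the lemma and the reason it is stated. There is no analytic difficulty; once the identity $\alpha_{a\ell} + \alpha_{\ell b} \equiv \alpha_{ab} \bmod h$ is in hand, Theorem \ref{circ-char} does all the structural work and the rest is careful reading of exponents. I would be explicit about the case split $t = 0$ versus $1 \le t \le h-1$ since the formula $(-t)\bmod h$ is not uniform across that boundary, and I would double-check the $t = 1$ entry (which gives the $1$ in the reference vector via $\zeta^h = 1$) as the one most prone to an off-by-one error.
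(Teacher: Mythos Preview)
Your overall plan is the same as the paper's: compute the $(a,b)$-entry of each outer product, observe it depends only on $\alpha_{ab}$, invoke Theorem~\ref{circ-char}, and read off the first row. Your treatment of $W_{k\ell}^1$ is fine and matches the paper's computation $[W_{k\ell}^1]_{ij} = (\omega^k)^{\alpha_{i+1,j}} = (\omega^k)^{\alpha_{ij}+1}$.

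The trouble is in your handling of $W_{k\ell}^2$: you write $\alpha_{(\ell+1)b} = \alpha_{\ell b} - 1$, but since $\alpha_{ij} = (i-j)\bmod h$ one has
\[
\alpha_{(\ell+1)b} = (\ell+1-b)\bmod h \equiv \alpha_{\ell b} + 1 \pmod{h},
\]
so the sign is reversed. With the correct sign you get
\[
\left[ W_{k\ell}^2 \right]_{ab} = \zeta^{\alpha_{a\ell} + \alpha_{\ell b} + 1} = \zeta^{\alpha_{ab}+1} = \left[ W_{k\ell}^1 \right]_{ab},
\]
and the second identity is immediate---this is exactly how the paper finishes (via the equivalent identity $\alpha_{\ell+1,j}=\alpha_{\ell,j-1}$). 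Your ``wait, this must also come out to\ldots'' moment and the subsequent detour through column shifts by $K_h$ are symptoms of this sign slip, not of any genuine bookkeeping subtlety; once the sign is fixed there is nothing left to do for $W_{k\ell}^2$.
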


\begin{proof}
The following facts are easily established:
\begin{align}
\alpha &\equiv \beta \bmod{h} \Longrightarrow (\omega^k)^\alpha = (\omega^k)^\beta,~k \in \mathbb{Z}	\label{omega}   \\
\alpha_{ij} &= (\alpha_{i \ell} + \alpha_{\ell j})\bmod{h},~\forall i,j,\ell \in \mathbb{Z}				\label{alpha3}	\\
\alpha_{i+1,j} &= \alpha_{i,j-1} = (\alpha_{ij} + 1) \bmod{h},~\forall i,j \in \mathbb{Z}.			    \label{alpha2}	
\end{align}
If $k\in R(h)$ and $\ell \in \langle r \rangle$, then
\begin{align*}
\left[W_{k\ell}^1 \right]_{ij} 
&= \omega^k(\omega^k)^{\alpha_{i\ell}} (\omega^k)^{\alpha_{\ell j}}                         \\
&= \omega^k(\omega^k)^{(\alpha_{i\ell} + \alpha_{\ell j})\bmod{h}}  \tag{by \eqref{omega}}  \\
&= \omega^k(\omega^k)^{\alpha_{ij}}                                 \tag{by \eqref{alpha3}} \\
&= (\omega^k)^{(\alpha_{ij} + 1)\bmod{h}}                           \tag{by \eqref{omega}}  \\
&= (\omega^k)^{\alpha_{i+1,j}}                                      \tag{by \eqref{alpha2}}
\end{align*}
If $(i,j)\sim(p,q)$, then
\[
\left[ W_{k\ell}^1 \right]_{ij} 
= (\omega^k)^{\alpha_{i+1,j}} 
= (\omega^k)^{\alpha_{p+1,q}} 
= \left[W_{k\ell}^1 \right]_{pq}, \]
so that, by Theorem \ref{circ-char}, $W_{k\ell}^1$ is a circulant. Since 
\[
\left[ W_{k\ell}^1 \right]_{1j} = (\omega^k)^{\alpha_{2j}} = (\omega^k)^{(2-j)\bmod{h}},
\]
it follows that 
\begin{align*}
\begin{pmatrix}
\left[ W_{k\ell}^1 \right]_{11}, 
\left[ W_{k\ell}^1 \right]_{12},
\left[ W_{k\ell}^1 \right]_{13},
\ldots, 
\left[ W_{k\ell}^1 \right]_{1h}
\end{pmatrix} 
=
\begin{pmatrix}
\omega^k, 
1,  
(\omega^k)^{h-1}, 
\ldots, 
(\omega^k)^2
\end{pmatrix},
\end{align*} 
as desired.

For the second claim, notice that 
\begin{align*}
\left[ W_{k\ell}^2 \right]_{ij} 
&= (\omega^k)^{\alpha_{i\ell}} (\omega^k)^{\alpha_{\ell+1,j}}                           \\
&= (\omega^k)^{\alpha_{i\ell}} (\omega^k)^{\alpha_{\ell,j-1}}   \tag{by \eqref{alpha2}} \\
&= (\omega^k)^{(\alpha_{i\ell} + \alpha_{\ell,j-1})\bmod{h}}    \tag{by \eqref{omega}}  \\
&= (\omega^k)^{\alpha_{i,j-1}}                                  \tag{by \eqref{alpha3}} \\
&= (\omega^k)^{\alpha_{i+1,j}}                                  \tag{by \eqref{alpha2}} \\
&= \left[ W_{k\ell}^1 \right]_{ij},
\end{align*}
as desired.
\end{proof}

\begin{lem}
[{c.f.~\cite[Remark 4.4]{mp2016}}]
\label{K-circulant}
If $k \in R(h)$ and  
\begin{equation}
\label{Ck}
C_k := \circulant(\omega^k, 1, (\omega^k)^{h-1}, \ldots, (\omega^k)^2) \in \mathsf{M}_h(\mathbb{C}),    
\end{equation}
then 
\[
\sum_{k=0}^{h-1}C_k = h K_h = \circulant(0, h, 0, \ldots, 0).
\] 
\end{lem}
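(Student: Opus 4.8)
The plan is to compute the reference vector of the sum $\sum_{k=0}^{h-1} C_k$ directly, using the fact that a sum of circulants with the same indexing is again a circulant whose reference vector is the sum of the individual reference vectors. Concretely, by the definition of $C_k$ in \eqref{Ck}, the $j$-th entry of its reference vector is $(\omega^k)^{(2-j)\bmod h}$ for $j \in \langle h \rangle$ (reading off $\omega^k = (\omega^k)^1$, $1 = (\omega^k)^0$, $(\omega^k)^{h-1}$, \dots, $(\omega^k)^2$, which matches the exponents $(2-j)\bmod h$ as $j$ runs from $1$ to $h$). Hence $\sum_{k=0}^{h-1} C_k$ is the circulant whose $j$-th reference entry is $\sum_{k=0}^{h-1} (\omega^k)^{(2-j)\bmod h} = \sum_{k=0}^{h-1} \omega^{k\,((2-j)\bmod h)}$.

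The key step is then the standard geometric-sum identity for roots of unity: for an integer $m$ with $0 \le m \le h-1$,
\[
\sum_{k=0}^{h-1} \omega^{km} =
\begin{cases}
h, & m \equiv 0 \bmod h, \\
0, & \text{otherwise},
\end{cases}
\]
since $\omega^m$ is an $h$-th root of unity that equals $1$ exactly when $h \mid m$, and otherwise $\sum_{k=0}^{h-1}(\omega^m)^k = \tfrac{(\omega^m)^h - 1}{\omega^m - 1} = 0$. Applying this with $m = (2-j)\bmod h$: the condition $m = 0$ holds precisely when $j \equiv 2 \bmod h$, i.e. when $j = 2$ (as $j \in \langle h \rangle$ and $h \ge 2$). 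Therefore the reference vector of $\sum_{k=0}^{h-1} C_k$ is $(0, h, 0, \ldots, 0) = h\,e_2^\top$, so $\sum_{k=0}^{h-1} C_k = \circulant(0,h,0,\ldots,0) = h\,\circulant(e_2) = h K_h$, which is the claim.

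There is essentially no serious obstacle here; the only thing requiring a modicum of care is bookkeeping the correspondence between the listed entries of the reference vector in \eqref{Ck} and the exponents $(2-j)\bmod h$, and noting the edge case $h=2$ (where the reference vector $(\omega^k, 1, (\omega^k)^{h-1}, \ldots, (\omega^k)^2)$ degenerates to $(\omega^k, 1)$, and indeed $\sum_{k=0}^{1}(\omega^k,1) = (1 + (-1), 2) = (0,2) = 2 e_2^\top$, consistent with the formula). I would also remark that closure of circulants under addition with a common reference index is immediate from Definition \ref{circulant-def-1}, or alternatively from Theorem \ref{circ-char}, since entrywise sums preserve the constancy along each $\sim$-class.
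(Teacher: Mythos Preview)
Your proof is correct and follows essentially the same approach as the paper: both arguments use closure of circulants under addition to reduce the claim to summing the reference vectors entrywise, and then apply the standard geometric-sum identity $\sum_{k=0}^{h-1}(\omega^k)^p = h\,[p\equiv 0]$ to show that only the second entry survives. Your version is simply more explicit about the bookkeeping (identifying the $j$-th reference entry of $C_k$ as $(\omega^k)^{(2-j)\bmod h}$) and handles the $h=2$ edge case, but there is no substantive difference.
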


\begin{proof}
Follows from the fact that circulant matrices are closed with respect to addition \cite[Theorem 3.24]{d1979} and the fact that 
\[
\sum_{k=0}^{h-1} (\omega^k)^p = h, 
\]
if $p = 0$ and 
\[
\sum_{k=0}^{h-1} (\omega^k)^p 
= \sum_{k=0}^{h-1} (\omega^p)^k 
= \frac{(\omega^p)^h - 1}{\omega^p - 1}
= \frac{(\omega^h)^p - 1}{\omega^p - 1}
= 0, 
\]
whenever $p \ne 0$.
\end{proof}

\begin{thm}
\label{thm:jchain}
Let $A \in \mathsf{M}_{n}(\mathbb{C})$ and let $P = \{ V_1, \ldots, V_h \}$ be a partition of $\langle n \rangle$.
\begin{enumerate}
[label = (\roman*)]
\item \label{cond(i)}
Suppose that, for every eigenvalue $\lambda \in \sigma(A)$ with corresponding right Jordan chain $\left\{ x_{\langle 0, j \rangle} \right\}_{j=1}^p$, and whenever the vector $x_{\left< 0, j \right>}$ is partitioned conformably with respect to  $P$ as 
\begin{align*}
x_{\left< 0, j \right>} = 
\begin{bmatrix} 
x_{1j} \\ 
\vdots \\ 
x_{hj} \end{bmatrix},~j \in \langle p \rangle,
\end{align*}
the set 
\begin{align*}
\left\{
x_{\left< k, j \right>} :=
\begin{bmatrix} 
(\omega^k)^{\alpha_{1j}} x_{1j} \\ 
\vdots \\ 
(\omega^k)^{\alpha_{hj}} x_{hj} 
\end{bmatrix} 
\right\}_{j=1}^p
\end{align*}
is a right Jordan chain corresponding to $\lambda \omega^k$ for every $k \in R(h)$. 
 
\item \label{cond(ii)}
Suppose that, for every eigenvalue $\lambda \in \sigma(A)$ with corresponding left Jordan chain $\left\{ y_{\left< j, 0 \right>} \right\}_{j = 1}^p$, and whenever the vector $y_{\left< j , 0 \right>}$ is partitioned conformably with respect to  $P$ as 
\begin{align*}
y_{\left< j , 0 \right>}^\top 
= 
\begin{bmatrix} 
y_{j1}^\top & 
\cdots & 
y_{jh}^\top 
\end{bmatrix},~j \in \langle p \rangle,
\end{align*}
the set
\begin{align*}
\left\{  
y_{\left< j , k \right>}^\top :=
\begin{bmatrix} 
(\omega^k)^{\alpha_{j1}} y_{j1}^\top & 
\cdots & 
(\omega^k)^{\alpha_{jh}} y_{jh}^\top 
\end{bmatrix} \right\}_{j = 1}^p 										
\end{align*}
is a left Jordan chain corresponding to $\lambda \omega^k$ for every $k \in R(h)$ 
\end{enumerate}
If the above hold, then $A$ is $h$-cyclic with partition $P$.
\end{thm}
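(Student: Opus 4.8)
The plan is to show that the hypotheses force every entry of $A$ outside the block superdiagonal of the form \eqref{cyclic_form} to vanish. Equivalently, writing $A$ in block form $A = (A_{st})_{s,t \in \langle h \rangle}$ with $A_{st} = A(V_s, V_t)$, I want to conclude $A_{st} = 0$ unless $t \equiv s+1 \pmod h$. The key idea is to assemble, from a single Jordan chain and its $h$ ``twisted'' copies produced by the hypothesis, enough relations to annihilate the unwanted blocks. The engine for this is the identity in Lemma \ref{K-circulant}: summing the $h$ diagonal rescalings indexed by $k \in R(h)$ collapses a full circulant into $h K_h$, which is exactly the ``shift by one'' pattern. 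So the strategy is: use the hypothesis to get, for each $k$, a Jordan-chain relation $A x_{\langle k,j\rangle} = \lambda\omega^k x_{\langle k,j\rangle} + x_{\langle k,j-1\rangle}$; rewrite each such relation blockwise; multiply by an appropriate scalar and sum over $k$; and read off that the off-pattern blocks of $A$ must kill every vector in sight.

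More concretely, I would first fix $\lambda \in \sigma(A)$ and a right Jordan chain, and spell out the block $s$ of the relation $A x_{\langle k,j\rangle} = \lambda\omega^k x_{\langle k,j\rangle} + x_{\langle k,j-1\rangle}$: it reads $\sum_{t=1}^h (\omega^k)^{\alpha_{tj}} A_{st} x_{tj} = (\omega^k)^{\alpha_{s,j}}\bigl(\lambda\omega^k x_{sj} + x_{s,j-1}\bigr)$, using that $\alpha_{s,j-1} = \alpha_{sj}+1 \bmod h$ via \eqref{alpha2}. Dividing by $(\omega^k)^{\alpha_{sj}}$ and invoking \eqref{omega}, the right-hand side becomes $\lambda\omega^k x_{sj} + x_{s,j-1}$, \emph{independent of the relative phases}, while on the left the coefficient of $A_{st} x_{tj}$ is $(\omega^k)^{\alpha_{tj}-\alpha_{sj}} = (\omega^k)^{\alpha_{ts}}$ by \eqref{alpha3}. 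Now sum over $k \in R(h)$: by the geometric-sum computation in the proof of Lemma \ref{K-circulant}, $\sum_{k=0}^{h-1}(\omega^k)^{\alpha_{ts}} = h$ if $\alpha_{ts}=0$ (i.e.\ $t=s$) and $0$ otherwise. One also needs a second combination, namely summing $(\omega^{-k})\cdot(\text{relation})$ or equivalently isolating a different residue, to pick out $t$ with $\alpha_{ts} = $ the appropriate value; the point of Lemma \ref{K-circulant} is precisely that the family $\{(\omega^k)^{\alpha_{ts}+c}\}_k$ sums to $h$ exactly when $\alpha_{ts} \equiv -c$.

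Carrying this out: multiplying the (normalized) block-$s$ relation by $(\omega^k)^{\alpha_{s+1,s}} = \omega^k$ and summing over $k$ annihilates every term $A_{st}x_{tj}$ except $t = s+1$, and kills the right-hand side entirely (since $\sum_k \omega^k = 0$ and $\sum_k \omega^{2k}=0$ for $h>2$; the $h=2$ and degenerate cases need a separate glance), leaving no information — so instead one multiplies by $(\omega^k)^{0}=1$ and sums, obtaining $h\,A_{ss}x_{sj} = h(\lambda x_{sj}\cdot\tfrac1h\sum_k\omega^k + \ldots)$; the cleanest route is to sum $(\omega^k)^{-c}\times(\text{block-}s\text{ relation})$ for each fixed $c \in R(h)$, which isolates $A_{s,t}x_{tj}$ for the unique $t$ with $\alpha_{ts}=c$ on the left and isolates $(\omega^k)^{1-c}$- and $(\omega^k)^{-c}$-sums on the right. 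For $c \notin \{0,1\}$ the right side vanishes, yielding $A_{s,t}x_{tj} = 0$ for that $t$. Running this over all Jordan chains of $\lambda$, all $\lambda\in\sigma(A)$, and all $j$, the vectors $x_{tj}$ span $\mathbb{C}^{|V_t|}$ (since generalized eigenvectors of $A$, restricted to block $t$, span — this uses that the full collection of Jordan chains gives a basis of $\mathbb{C}^n$ and a short argument that the $t$-th blocks span $\mathbb{C}^{|V_t|}$), forcing $A_{s,t}=0$ whenever $\alpha_{ts}\notin\{0,1\}$. The left-chain hypothesis \ref{cond(ii)}, run symmetrically, kills the case $\alpha_{ts}=0$, i.e.\ the diagonal blocks $A_{ss}$. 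What remains is exactly $\alpha_{ts}=1$, i.e.\ $t = s+1$, which is the form \eqref{cyclic_form}, so $A$ is $h$-cyclic with partition $P$.

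The main obstacle I anticipate is the spanning argument: the hypothesis is quantified over \emph{every} Jordan chain, so in principle one has all generalized eigenvectors available, but one must argue carefully that for each fixed block index $t$ the projections onto $\mathbb{C}^{|V_t|}$ of the generalized eigenvectors span the whole space — a priori some block could be systematically zero. A clean way around this is to note that if the $t$-th blocks of \emph{all} generalized eigenvectors lay in a proper subspace $U \subsetneq \mathbb{C}^{|V_t|}$, then choosing $0 \neq u \perp U$, the vector $e \otimes u$ supported on $V_t$ would be orthogonal to a basis of $\mathbb{C}^n$, a contradiction. The secondary nuisance is the bookkeeping of the small cases ($h=2$, or $\lambda = 0$ with non-maximal chains) where some of the geometric sums degenerate; these should be handled by the same Lemma \ref{K-circulant} identity, which is uniform in $h$, but they warrant an explicit remark. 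Finally, one should double-check the edge convention $\alpha_{tj}$ vs.\ $\alpha_{ts}$ indexing so that the exponent arithmetic in \eqref{alpha2}--\eqref{alpha3} lines up with Definition \ref{consecutive-product}'s cyclic shift $\alpha$.
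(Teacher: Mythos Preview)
Your approach is correct and genuinely different from the paper's. The paper reconstructs $A$ explicitly via the dyadic (outer-product) expansion coming from the Jordan decomposition: it writes $A = \sum_i A_i$ where each $A_i = \sum_{k,j} \bigl(\lambda_i\omega^k x_{\langle k,j\rangle} y_{\langle j,k\rangle}^\top + x_{\langle k,j\rangle} y_{\langle j+1,k\rangle}^\top\bigr)$, then recognises each summand as a Hadamard product of a rank-one block matrix with the circulant $C_k$ of Lemma~\ref{K-circulant}, and uses $\sum_k C_k = hK_h$ to see directly that each $A_i$ has the block pattern~\eqref{cyclic_form}. That argument uses hypotheses \ref{cond(i)} and \ref{cond(ii)} symmetrically and is constructive. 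Your route is more elementary: you read the chain relations $A x_{\langle k,j\rangle} = \lambda\omega^k x_{\langle k,j\rangle} + x_{\langle k,j-1\rangle}$ blockwise, Fourier-average over $k$ to isolate a single block $A_{s,s+c}$ acting on $x_{s+c,j}$, and then use that the block projections of a Jordan basis span each $\mathbb{C}^{|V_t|}$ (your orthogonality argument is fine; equivalently, the coordinate projection $\mathbb{C}^n \to \mathbb{C}^{|V_t|}$ is surjective and sends spanning sets to spanning sets). This buys you a cleaner proof that avoids Hadamard products and the outer-product machinery.

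One arithmetic slip worth fixing: after dividing the block-$s$ relation by $(\omega^k)^{\alpha_{sj}}$, the second term on the right is $\omega^k x_{s,j-1}$, not $x_{s,j-1}$, because $\alpha_{s,j-1}=(\alpha_{sj}+1)\bmod h$ via \eqref{alpha2}. Thus both right-hand terms carry the \emph{same} factor $\omega^k$, and after multiplying by $(\omega^k)^{-c}$ and summing over $k$ the right side vanishes for every $c\neq 1$, including $c=0$. So the diagonal blocks $A_{ss}$ are already killed by the right-chain hypothesis \ref{cond(i)} alone, and your appeal to \ref{cond(ii)} is unnecessary. In other words, your argument actually proves a slight strengthening of the theorem: hypothesis \ref{cond(i)} by itself forces $A$ to be $h$-cyclic with partition $P$. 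The ``small cases'' you flag ($h=2$, $j=1$, $\lambda=0$) are all absorbed by the same computation, since the geometric-sum identity in Lemma~\ref{K-circulant} is uniform in $h\geq 2$ and the $j=1$ relation simply omits the $x_{s,j-1}$ term.
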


\begin{proof}
By hypothesis, any Jordan canonical form of $A$ is of the form
\[ 
S^{-1} A S = 
\bigoplus_{i=1}^m \left( \bigoplus_{k=0}^{h-1} J_{n_i}\left(\lambda_i\omega^k \right) \right), 1 \le m < n. \]
For $i \in \langle m \rangle$, let
\[
D_i := \diag\left(0,\ldots,0,\overset{i}{\overbrace{\bigoplus_{j=0}^{h-1} J_{n_i}(\lambda_i)}},0,\ldots,0 \right)
\]
and \( A_i := S D_i S^{-1}\in \mathsf{M}_n(\mathbb{C})\), where $\diag(M_1,\ldots,M_k)$ denotes the block-diagonal matrix with block-diagonal entries $M_1,\ldots,M_k$.

For $k \in R(h)$, let $C_k$ be defined as in \eqref{Ck}. By Lemmas \ref{W-matrices} and \ref{K-circulant} and properties of the Hadamard product, notice that 
\begin{align*}
A_i  
&= 
\sum_{k=0}^{h-1} 
\left( 
\sum_{j=1}^{p_i} \lambda_i \omega^k x_{\left<k, j \right>} y_{\left< j , k \right>}^\top 
+ 
\sum_{j=1}^{p_i - 1} x_{\left<k, j \right>} y_{\left< j+1 , k \right>}^\top
\right) 						\\ 
&= 
\sum_{k=0}^{h-1} 
\left(
\sum_{j=1}^{p_i}
\lambda_i \omega^k 	
\begin{bmatrix} 
(\omega^k)^{\alpha_{1j}} x_{1j} \\ 
\vdots                          \\ 
(\omega^k)^{\alpha_{hj}} x_{hj} 
\end{bmatrix} 
\hspace{-3pt}
\begin{bmatrix} 
(\omega^k)^{\alpha_{j1}} y_{j1}^\top & \cdots & (\omega^k)^{\alpha_{jh}} y_{jh}^\top 
\end{bmatrix} \right.
+																	\\
& \qquad \left.
\sum_{j=1}^{p_i-1}
\begin{bmatrix} 
(\omega^k)^{\alpha_{1j}} x_{1j} \\ 
\vdots \\ 
(\omega^k)^{\alpha_{hj}} x_{hj} 
\end{bmatrix} 
\hspace{-3pt}
\begin{bmatrix} 
(\omega^k)^{\alpha_{j+1,1}} y_{j+1,1}^\top & \cdots & (\omega^k)^{\alpha_{j+1,h}} y_{j+1, h}^\top 
\end{bmatrix}  
\right)																	\\
&= 
\lambda_i
\sum_{k=0}^{h-1} \sum_{j=1}^{p_i} W_{kj}^1 \circ x_{\langle 0, j \rangle} y_{\langle j, 0 \rangle}^\top 
+
\sum_{k=0}^{h-1} \sum_{j=1}^{p_i-1} W_{kj}^2 \circ x_{\langle 0, j \rangle} y_{\langle j+1, 0  \rangle}^\top                    \\	
&= 
\lambda_i \sum_{j=1}^{p_i} \left( \sum_{k=0}^{h-1} C_k \circ x_{\langle 0, j \rangle} y_{\langle j, 0 \rangle}^\top \right) +
\sum_{j=1}^{p_i - 1} \left( \sum_{k=0}^{h-1} C_k \circ x_{\langle 0, j \rangle} y_{\langle j+1, 0 \rangle}^\top \right)         \\
&= 
\lambda_i \sum_{j=1}^{p_i} \left[ \left( \sum_{k=0}^{h-1} C_k \right) \circ x_{\langle 0, j \rangle} y_{\langle j, 0 \rangle}^\top \right] +				
\sum_{j=1}^{p_i - 1} 
\left[ \left( \sum_{k=0}^{h-1} C_k \right) \circ x_{\langle 0, j \rangle} y_{\langle j+1, 0 \rangle}^\top \right] 		        \\
&= 
\lambda_i h \sum_{j=1}^{p_i} K_h \circ x_{\langle 0, j \rangle} y_{\langle j, 0 \rangle}^\top + 
h \sum_{j=1}^{p_i-1} K_h \circ x_{\langle 0, j \rangle} y_{\langle j+1, 0 \rangle}^\top 								        \\
&= 
\lambda_i h \sum_{j=1}^{p_i}
\begin{bmatrix}
                    & x_{1j} y_{j2}^\top    &           &   	                    \\
                    &                       & \ddots    & 	                        \\
                    &                       &           & x_{(h-1)j} y_{jh}^\top	\\
x_{hj} y_{j1}^\top  &                       &           & 
\end{bmatrix} +															\\
& \qquad
h \sum_{j=1}^{p_i - 1}
\begin{bmatrix}
                        & x_{1j} y_{j+1,2}^\top &           &     	                    \\
                        &                       & \ddots    &     		                \\
                        &                       &           & x_{h-1,j} y_{j+1,h}^\top	\\
x_{hj} y_{j+1,1}^\top   &                       &           & 
\end{bmatrix} 															                                                        \\
&= 
\begin{bmatrix}
                & A_{12}^{(i)}  &           &   		            \\
                &               & \ddots    & 	                    \\
                &               &           &  A_{h-1,h}^{(i)}		\\
A_{h1}^{(i)}    &               &           & 
\end{bmatrix},							
\end{align*}
where 
\begin{align*}
A_{k,k+1}^{(i)} := 
\lambda_i h 
\sum_{j=1}^{p_i} x_{kj} y_{j,k + 1}^\top + 
h 
\sum_{j=1}^{p_i - 1} x_{kj} y_{j+1,k+1}^\top \in \mathsf{M}_{\vert V_k \vert \times \vert V_{k+1} \vert}(\mathbb{C}), 
\end{align*}
$k \in \langle h \rangle$, and, for convenience, $h + 1 = 1$. 

Clearly, the matrices $A_1,\dots,A_m$ are $h$-cyclic with partition $P$ and since $A = \sum_{i=1}^m A_i$, it follows that $A$ is $h$-cyclic with partition $P$.   
\end{proof}

\begin{ex} 
Consider the matrices
\[J = 
\left[
\begin{array}
{*{2}{c}|*{2}{c}}
0 & 1 & 0 & 0\\
0 & 0 & 0 & 0\\
\hline
0 & 0 & 0 & 1\\
0 & 0 & 0 & 0
\end{array}
\right]
\]
and
\[ 
S = 
\begin{bmatrix}
x & a & x & -a\\
y & b & y & -b\\
z & c & -z & c\\
w & d & -w & d
\end{bmatrix}\]
where $\det(S) = -4(ay-bx)(cw-dz) \ne 0$. 

If $A = S J S^{-1}$ and $P= \{ \{1,2\},\{3,4\}\}$, then $A$ satisfies the hypotheses of Theorem \ref{thm:jchain}, so $A$ is bipartite. Indeed, a calculation via a computer algebra system reveals that 
\begin{align*}
A                                                                      
=
\begin{bmatrix}
0                   & 0                 & \frac{wx}{cw-dz}  & \frac{-xz}{cw-dz} \\
0                   & 0                 & \frac{wy}{cw-dz}  & \frac{-yz}{cw-dz} \\
\frac{yz}{ay-bx}    & \frac{-xz}{ay-bx} & 0                 & 0                 \\
\frac{wy}{ay-bx}    & \frac{-xw}{ay-bx} & 0                 & 0
\end{bmatrix}.
\end{align*}
\end{ex}

The following result yields a complete characterization of the Jordan structure for invertible $h$-cyclic matrices.

\begin{thm}
If $A \in \mathsf{M}_n(\mathbb{C})$ is nonsingular and $P = \{ V_1,\ldots, V_h \}$ is a partition of $\langle n \rangle$, then $A$ is $h$-cyclic with partition $P$ if and only if the Jordan chains of $A$ satisfy conditions \ref{cond(i)} and \ref{cond(ii)} of Theorem \ref{thm:jchain}.  
\end{thm}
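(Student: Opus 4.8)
The plan is to combine the forward implication, already recorded as Lemma~\ref{lem:jchainlemma}, with the converse just proved in Theorem~\ref{thm:jchain}. The forward direction is essentially automatic: if $A$ is $h$-cyclic with partition $P$, then, after the harmless permutation similarity, $A$ has the form \eqref{cyclic_form} and Lemma~\ref{lem:jchainlemma} says precisely that each right (resp.\ left) Jordan chain of $A$ spawns, via the stated scaling by powers of $\omega^k$, a right (resp.\ left) Jordan chain for $\lambda\omega^k$ for every $k \in R(h)$. That is conditions \ref{cond(i)} and \ref{cond(ii)}. The only subtlety is that Theorem~\ref{thm:jchain} quantifies over \emph{every} Jordan chain and \emph{every} eigenvalue, while Lemma~\ref{lem:jchainlemma} is phrased for a given chain; but since the conclusion of the lemma holds for an arbitrary chain and arbitrary $\lambda$, the universally quantified statement follows at once.

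For the reverse implication I would simply invoke Theorem~\ref{thm:jchain}: if the Jordan chains of $A$ satisfy \ref{cond(i)} and \ref{cond(ii)}, then $A$ is $h$-cyclic with partition $P$. Here the nonsingularity hypothesis is not even needed for the conclusion; it is carried along only because this theorem is being advertised as a characterization for the invertible case (and because, by Theorem~\ref{equal-size-partition-sets}, nonsingularity forces $|V_1| = \cdots = |V_h|$, so the ``shape'' of the $h$-cyclic structure is pinned down). So the proof is two sentences: one citing Lemma~\ref{lem:jchainlemma} for the ``only if'' direction and one citing Theorem~\ref{thm:jchain} for the ``if'' direction.

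There is no real obstacle here; the theorem is a corollary packaging the two preceding results. The only thing to be slightly careful about is the bookkeeping around the permutation similarity: Lemma~\ref{lem:jchainlemma} and Theorem~\ref{thm:jchain} both presuppose (or conclude) the consecutive form, and one should note that passing to $Q^\top A Q$ permutes the entries of each Jordan chain vector by the same $Q$, so conditions \ref{cond(i)} and \ref{cond(ii)} are preserved under that relabeling; this is already implicit in the standing convention, recorded after \eqref{cyclic_form}, that every $h$-cyclic matrix may be taken to be of the form \eqref{cyclic_form}. Thus I would write:

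\begin{proof}
If $A$ is $h$-cyclic with partition $P$, then, by the standing convention following \eqref{cyclic_form}, we may assume $A$ is of the form \eqref{cyclic_form} with respect to $P$, and Lemma~\ref{lem:jchainlemma} shows that every right and left Jordan chain of $A$ satisfies conditions \ref{cond(i)} and \ref{cond(ii)} of Theorem~\ref{thm:jchain}. Conversely, if the Jordan chains of $A$ satisfy conditions \ref{cond(i)} and \ref{cond(ii)}, then $A$ is $h$-cyclic with partition $P$ by Theorem~\ref{thm:jchain}.
\end{proof}
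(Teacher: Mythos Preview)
Your proposal is correct and mirrors the paper's own proof exactly: the paper simply writes ``Follows by Lemma~\ref{lem:jchainlemma} and Theorem~\ref{thm:jchain},'' which is precisely your two-sentence argument. Your additional remarks about the permutation-similarity convention and the universal quantification are accurate but not strictly necessary, since both are already absorbed into the standing assumptions.
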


\begin{proof}
Follows by Lemma \ref{lem:jchainlemma} and Thereom \ref{thm:jchain}.
\end{proof}

\section{Jordan Chains of Singular \emph{h}-cyclic Matrices}

\begin{lem}
\label{lem:singular-spectrums}
If $A\in \mathsf{M}_n(\mathbb{C})$ is $h$-cyclic, then $A$ is singular if and only if at least one of the matrices $B_i$ is singular.
\end{lem}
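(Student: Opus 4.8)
The plan is to reduce the statement to a determinant computation using the block-diagonal factorization of $A^h$ recorded in \eqref{A-to-the-h}. The key observation is that singularity of $A$ is equivalent to singularity of $A^h$, and that $A^h$ is, up to the direct sum in \eqref{A-to-the-h}, completely transparent.

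First I would note that, since the determinant is multiplicative, $\det(A^h) = (\det A)^h$, so $A$ is singular if and only if $A^h$ is singular; equivalently, one may invoke the elementary fact that $0 \in \sigma(A) \iff 0 \in \sigma(A^h)$ (already used in the proof of Theorem \ref{equal-size-partition-sets}). Next, because $A$ is $h$-cyclic and hence (after the harmless permutation similarity) of the form \eqref{cyclic_form}, equation \eqref{A-to-the-h} gives $A^h = \bigoplus_{j=1}^h B_j$, whence $\det(A^h) = \prod_{j=1}^h \det(B_j)$. This product vanishes if and only if $\det(B_i) = 0$ for at least one $i \in \langle h \rangle$, i.e., if and only if some $B_i$ is singular. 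Chaining these equivalences yields the claim.

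I do not anticipate a genuine obstacle here; the statement is essentially a corollary of \eqref{A-to-the-h}. The only minor point worth stating carefully is the passage between singularity of $A$ and of $A^h$ (both directions), and the implicit use that the $B_i$ are square — which is already noted in Definition \ref{consecutive-product} — so that $\det(B_i)$ makes sense. One could alternatively phrase the whole argument spectrally via Theorem \ref{thm:Minc-result}, but the determinant route is shorter and self-contained.
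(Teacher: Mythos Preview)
Your argument is correct and matches the paper's proof in substance: both reduce to the block-diagonal identity \eqref{A-to-the-h} together with the equivalence ``$A$ singular $\iff$ $A^h$ singular.'' The paper phrases this spectrally (writing $\sigma(A^h)=\bigcup_i \sigma(B_i)$ and checking whether $0$ lies in the union), while you phrase it via determinants (writing $\det(A^h)=\prod_i \det(B_i)$ and checking whether the product vanishes); these are interchangeable formulations of the same idea.
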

\begin{proof}
In view of \eqref{A-to-the-h}, it follows that $\sigma(A^h) = \bigcup_{i=1}^h \sigma(B_i)$. Thus, $0\in\sigma(A^h)$ if and only if one of the submatrices $B_i$ of $A^h$ is singular, and the latter holds if and only if $0\in\sigma(A)$.
\end{proof}

\begin{lem}
\label{lem:circpiece}
Let $A\in \mathsf{M}_n(\mathbb{C})$ be $h$-cyclic and of the form \eqref{cyclic_form} with consecutive partition $P$. For $x \in \mathbb{C}^{\vert V_i \vert}$, where $i \in \langle h \rangle$, let $v \in \mathbb{C}^n$ be the conformably partitioned vector defined by 
\begin{align*}
    v_k = 
    \begin{cases}
    x, & k = i \\
    0, & k \ne i
    \end{cases}.
\end{align*}
If $p \in \mathbb{N}$, then 
\begin{align*}
    [A^p v]_k = 
    \begin{cases}
    B_{ip}x,    & k = \alpha^{-p}(i)     \\
    0,          & k \ne \alpha^{-p}(i).
    \end{cases}
\end{align*} 
\end{lem}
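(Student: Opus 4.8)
The plan is to prove this by induction on $p$, exploiting the block structure of $A$ given in \eqref{cyclic_form} together with the recursive structure of the partial products $B_{ip}$ from Definition \ref{consecutive-product}.

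First I would establish the base case $p = 1$. Here $Av$ has blocks computed directly from \eqref{cyclic_form}: since $v$ is supported only on block $i$, the product $Av$ is supported only on the block index $k$ for which $A_{k,i}$ is a (possibly) nonzero off-diagonal block of the cyclic form, namely $k$ with $\alpha(k) = i$, i.e. $k = \alpha^{-1}(i)$. On that block the value is $A_{\alpha^{-1}(i), i}\, x$, and one checks against Definition \ref{consecutive-product} that $B_{i1} = A_{\alpha^{-1}(i), i}$ (the single-factor product, obtained by taking $j = h$ in the defining product, since $\alpha^{h-1}(i) = \alpha^{-1}(i)$ and $\alpha^h(i) = i$). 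So the base case matches.

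For the inductive step, assume the claim holds for $p$, so $A^p v$ is supported on block $\alpha^{-p}(i)$ with value $B_{ip}x$ there. Apply $A$ once more. By the base-case reasoning applied to the vector $A^p v$ (which is now supported on the single block $\alpha^{-p}(i)$), the vector $A^{p+1}v = A(A^p v)$ is supported on block $\alpha^{-1}(\alpha^{-p}(i)) = \alpha^{-(p+1)}(i)$, with value $A_{\alpha^{-(p+1)}(i),\, \alpha^{-p}(i)} \cdot B_{ip} x$. It then remains to verify the identity $A_{\alpha^{-(p+1)}(i),\, \alpha^{-p}(i)} \, B_{ip} = B_{i,p+1}$, which is exactly the statement that prepending one more factor to the partial product in Definition \ref{consecutive-product} (extending the index range from $j \in \{h+1-p,\dots,h\}$ to $j \in \{h-p,\dots,h\}$) inserts the factor $A_{\alpha^{h-p-1}(i),\, \alpha^{h-p}(i)}$, and one checks $\alpha^{h-p-1}(i) = \alpha^{-(p+1)}(i)$ and $\alpha^{h-p}(i) = \alpha^{-p}(i)$ since $\alpha$ has order $h$.

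The main obstacle — really the only place care is needed — is bookkeeping with the exponents of the cyclic permutation $\alpha$: keeping straight that multiplication on the left by $A$ decrements the block index under $\alpha$ (because the nonzero block in row $k$ sits in column $\alpha(k)$), and reconciling the two different indexings of the product $B_{ip}$ (the "$j$ running from $h+1-p$ to $h$" form versus the "leftmost factor is $A_{\alpha^{-p}(i),\alpha^{-p+1}(i)}$" form). Once the index identities $\alpha^{h-m}(i) = \alpha^{-m}(i)$ are used freely, everything collapses to the two routine verifications described above. No genuine analytic or algebraic difficulty arises; the result is essentially a careful unwinding of Definition \ref{consecutive-product} and the block form \eqref{cyclic_form}.
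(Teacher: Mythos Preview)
Your proposal is correct and follows essentially the same inductive argument as the paper's proof: base case $p=1$ from the block form \eqref{cyclic_form}, then apply $A$ once more and match the new leftmost factor against Definition~\ref{consecutive-product} using $\alpha^{h-m}=\alpha^{-m}$. The paper's version merely splits each step into the two cases $\alpha^{-p}(i)=1$ and $1<\alpha^{-p}(i)\le h$ to handle the wrap-around explicitly, whereas you treat both uniformly via $\alpha^{-1}$; this is a presentational difference, not a substantive one.
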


\begin{proof}
Proceed by induction on $p$. If $p=1$ and $i=1$, then
\( \alpha^{-p}(i) = \alpha^{-1}(1) = h\). 
Thus, $[A^p v]_k = [A v]_k = 0$, whenever $1 \le k < h$, and \( [A^p v]_h = [A v]_h = A_{h1}x \).   
Since 
\[ B_{ip} = B_{11} = \prod_{j=h}^h A_{\alpha^{j-1}(1),\alpha^{j}(1)} = A_{\alpha^{h-1}(1),\alpha^{h}(1)} = A_{h1}, \]
the result holds when $i = p = 1$.

If $1 < i \le h$ and $p=1$, then \( \alpha^{-p}(i) = \alpha^{-1}(i) = i-1 \). Thus, $[A^p v]_k = [A v]_k = 0$, whenever $k \ne i-1$ and \( [A^p v]_{i-1} = [A v]_{i-1} = A_{i-1,i}x \). Since 
\[ B_{ip} = B_{i1} = \prod_{j=h}^h A_{\alpha^{j-1}(i),\alpha^{j}(i)} = A_{\alpha^{h-1}(i),\alpha^{h}(i)} = A_{i-1,i}, \]
the result holds when $1 < i \le h$ and $p = 1$, which completes the base-case.

For the induction-step, assume that 
\begin{align*}
    [A^p v]_k = 
    \begin{cases}
    B_{ip}x,    & k = \alpha^{-p}(i)     \\
    0,          & k \ne \alpha^{-p}(i),
    \end{cases}
\end{align*}
for some positive integer $p$. 

If $\alpha^{-p}(i) = 1$, then $\alpha^{-(p+1)}(i) = h$ and 
\begin{align*}
[A^{p+1} v]_{\alpha^{-(p+1)}(i)} 
&= [A (A^p v)]_h                                                                                                                                \\ 
&= A_{h1} B_{ip}x                                                                                                                               \\
&= A_{h1} \left( \prod_{j=h+1-p}^h A_{\alpha^{j-1}(i),\alpha^{j}(i)}x \right)                                                                   \\
&= A_{\alpha^{-(p+1)}(i), \alpha^{-p}(i)} \left( \prod_{j=h+1-p}^h A_{\alpha^{j-1}(i),\alpha^{j}(i)}x \right)                                   \\
&= A_{\alpha^{h-(p+1)}(i), \alpha^{h-p}(i)} \left( \prod_{j=h+1-p}^h A_{\alpha^{j-1}(i),\alpha^{j}(i)}x \right) \tag{$\alpha^h = \varepsilon$}  \\
&= \left( \prod_{j=h-p}^h A_{\alpha^{j-1}(i),\alpha^{j}(i)}x \right)                                                                            \\
&= B_{i,p+1} x,
\end{align*} 
as desired.

If $1 < \alpha^{-p}(i) \le h$, then $\alpha^{-(p+1)}(i) = i-1$ and 
\begin{align*}
[A^{p+1} v]_{\alpha^{-(p+1)}(i)} 
&= [A (A^p v)]_{i-1}                                                                                                                                \\ 
&= A_{i-1,i} B_{ip}x                                                                                                                               \\
&= A_{i-1,i} \left( \prod_{j=h+1-p}^h A_{\alpha^{j-1}(i),\alpha^{j}(i)}x \right)                                                                   \\
&= A_{\alpha^{-(p+1)}(i), \alpha^{-p}(i)} \left( \prod_{j=h+1-p}^h A_{\alpha^{j-1}(i),\alpha^{j}(i)}x \right)                                   \\
&= A_{\alpha^{h-(p+1)}(i), \alpha^{h-p}(i)} \left( \prod_{j=h+1-p}^h A_{\alpha^{j-1}(i),\alpha^{j}(i)}x \right) \tag{$\alpha^h = \varepsilon$}  \\
&= \left( \prod_{j=h-p}^h A_{\alpha^{j-1}(i),\alpha^{j}(i)}x \right)                                                                            \\
&= B_{i,p+1} x,
\end{align*} 
which completes the induction-step. The entire result now follows by the principle of mathematical induction.
\end{proof}

\begin{thm}
\label{thm:zero-j-chains}
Let $A\in \mathsf{M}_n(\mathbb{C})$ be $h$-cyclic with consecutive partition $P$. Suppose that $B_i$ is singular. If $x\in\nullspace(B_i),\ x\neq 0$, then there is a positive integer $p \le h$ such that $B_{ip}x = 0$ and $B_{iq}x\neq 0$ for all $1 \leq q < p$. Furthermore, any Jordan canonical form of $A$ has a $p\times p$ singular Jordan block.
\end{thm}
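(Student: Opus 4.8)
\emph{The first assertion} is immediate. From Definition~\ref{consecutive-product} (equivalently, from the recursion in the proof of Lemma~\ref{lem:circpiece}), $B_{i,q}=A_{\alpha^{-q}(i),\,\alpha^{-(q-1)}(i)}\,B_{i,q-1}$ for $q\ge 1$, with the convention $B_{i,0}:=I_{\vert V_i\vert}$; hence $\nullspace(B_{i,q-1})\subseteq\nullspace(B_{i,q})$ for all $q\ge 1$. Since $B_{i,h}=B_i$ annihilates $x$, the set $\{\,q\in\langle h\rangle : B_{i,q}x=0\,\}$ is nonempty, and its least element $p$ satisfies $p\le h$, $B_{i,p}x=0$, and $B_{i,q}x\ne 0$ for $1\le q<p$.

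\emph{Next I would exhibit a Jordan chain of length $p$.} Let $v\in\mathbb{C}^n$ be conformably partitioned with $v_i=x$ and $v_k=0$ for $k\ne i$. By Lemma~\ref{lem:circpiece}, $A^qv$ is supported on $V_{\alpha^{-q}(i)}$ with value $B_{i,q}x$ there, so $A^pv=0$ while $A^{p-1}v\ne 0$. Then $z_j:=A^{p-j}v$, $j\in\langle p\rangle$, satisfies $Az_1=0$ and $Az_j=z_{j-1}$ for $1<j\le p$, so $\{z_1,\dots,z_p\}$ is a Jordan chain of $A$ for $0$ and hence linearly independent; consequently every Jordan canonical form of $A$ has a singular Jordan block of size at least $p$.

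\emph{Upgrading ``at least $p$'' to ``exactly $p$''} is the real content. Lemma~\ref{lem:circpiece} also shows that for conformably partitioned $w$ and $s\ge 1$, $A^sw=0$ if and only if $B_{j,s}w_j=0$ for all $j$; thus $\nullspace(A^s)=\bigoplus_{j=1}^h\nullspace(B_{j,s})$ and $\dim\nullspace(A^s)=\sum_{j=1}^h\dim\nullspace(B_{j,s})$. If $d_s$ denotes the number of singular Jordan blocks of $A$ of size at least $s$, then $d_s=\dim\nullspace(A^s)-\dim\nullspace(A^{s-1})$, so
\[
d_s=\sum_{j=1}^h\bigl(\dim\nullspace(B_{j,s})-\dim\nullspace(B_{j,s-1})\bigr),
\]
and $A$ has a block of size exactly $p$ if and only if $d_p>d_{p+1}$. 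Here $d_p\ge 1$ (its $j=i$ summand is at least $1$, witnessed by $x$), and $d_{p+1}\le d_p$ holds automatically, so the theorem reduces to the strict inequality $d_p>d_{p+1}$.

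\emph{The main obstacle} is precisely this strictness. A clean sufficient condition is $v\notin\nullspace(A^{p-1})+\operatorname{range}(A)$ — i.e.\ that the chain $\{z_1,\dots,z_p\}$ is a maximal one — and the plan is to deduce this from the minimality of $p$ and the cyclic structure. The relevant mechanism is the identity $B_{\alpha(i),q}=B_{i,q-1}A_{i,\alpha(i)}$, which produces a descending chain of ranges $\operatorname{range}(B_{i,p-1})\supseteq\operatorname{range}(B_{\alpha(i),p})\supseteq\operatorname{range}(B_{\alpha^2(i),p+1})\supseteq\cdots$ (all lying in $\mathbb{C}^{\vert V_{\alpha^{-(p-1)}(i)}\vert}$) around the cycle $\alpha$; one must show that $B_{i,p-1}x$, which forces $d_p\ge 1$, does not already lie in the smaller range $\operatorname{range}(B_{\alpha(i),p})$ (and likewise further down the cycle), using $p\le h$ to terminate the descent. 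Carrying this bookkeeping through — together with the boundary behaviour when $p=h$ or when some $B_{i,q}$ is already rank-deficient on the directions that matter — is, I expect, where the difficulty concentrates.
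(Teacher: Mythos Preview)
Your first two paragraphs reproduce the paper's proof essentially verbatim: embed $x$ as the conformably partitioned vector $v$, invoke Lemma~\ref{lem:circpiece} to see that $A^hv=0$, take $p$ minimal with $A^pv=0$, and read off the Jordan chain $\{A^{p-1}v,\dots,Av,v\}$. That is exactly what the paper does, and at that point the paper's proof simply stops with the sentence ``Hence, any Jordan canonical form of $A$ has a $p\times p$ singular Jordan block.''

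Your third and fourth paragraphs try to upgrade ``at least $p$'' to ``exactly $p$'', and you should abandon that effort: the paper neither proves nor intends the ``exactly'' reading, and in fact the ``exactly'' reading is false. Take $h=3$, $|V_1|=|V_2|=|V_3|=1$, $A_{12}=A_{23}=1$, $A_{31}=0$, so that $A=J_3(0)$. For $i=2$ and $x=1\in\nullspace(B_2)$ one computes $B_{2,1}=A_{12}=1$ and $B_{2,2}=A_{31}A_{12}=0$, so $p=2$; equivalently, $v=e_2$ satisfies $Av=e_1\ne0$ and $A^2v=0$. The resulting length-$2$ chain $\{e_1,e_2\}$ sits inside the single $3\times3$ block, and there is no $2\times2$ singular Jordan block anywhere in the canonical form. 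Thus the conclusion of Theorem~\ref{thm:zero-j-chains} must be read as ``a singular Jordan block of size at least $p$'', which is precisely what the Jordan-chain argument in your second paragraph (and in the paper) delivers. The programme you sketch for proving strictness of $d_p>d_{p+1}$ cannot succeed in general, and the ``main obstacle'' you identify is not an obstacle to the theorem as proved in the paper.
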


\begin{proof}
Let $v \in \mathbb{C}^n$ be the conformably partitioned vector defined by 
\begin{align*}
    v_k = 
    \begin{cases}
    x, & k = i \\
    0, & k \ne i
    \end{cases}.
\end{align*}

By Lemma \ref{lem:circpiece} and the assumption that $x\in\nullspace(B_i)$, we have
\begin{align*}
    [A^h v]_k 
    =  
    \begin{cases}
    B_{ih}x,    & k = \alpha^{-h}(i)    \\
    0,          & k \ne \alpha^{-h}(i)
    \end{cases}                             
    =  
    \begin{cases}
    B_ix,   & k = i    \\
    0,      & k \ne i,
    \end{cases}
    = 0.
\end{align*}
Thus, the set
\[ \begin{Bmatrix} k \in \mathbb{N} \mid (A - 0I_n)^k v = A^k v = 0 \end{Bmatrix} \ne \emptyset \]
and contains a minimal element---say $p$---such that $p \le h$ and $A^q v \ne 0$ whenever $1 \le q < p$. Hence, in view of Lemma \ref{lem:circpiece}, $B_{ip}x = 0$ and $B_{iq}x \ne 0$.

If $w_k := A^{p-k} v$, then $\{ w_1,\dots,w_p \}$ is a Jordan chain corresponding to the eigenvalue zero. Hence, any Jordan canonical form of $A$ has a $p\times p$ singular Jordan block.
\end{proof}

\begin{rem}
For $i \in \langle h \rangle$, let $\mathcal{B}_i = \left\{ b_1^{(i)},\ldots,b_{d_i}^{(i)} \right\}$ be a basis for $\nullspace(B_i)$. In Theorem \ref{thm:zero-j-chains}, for $x \in \nullspace(B_i)$, if $p_x$ denotes the length of the Jordan chain corresponding to $x$, then 
\[ p_x = \max \left\{ p_1^{(i)},\ldots,p_{d_i}^{(i)} \right\}, \]
where $p_k^{(i)}$ is the length of the Jordan chain corresponding to $b_k^{(i)}$, $k \in \{1,\ldots,d_i\}$. Thus, it suffices to only consider elements of $\mathcal{B}_i$ to determine the Jordan chains of $A$ corresponding to the eigenvalue zero. 
\end{rem}

\begin{ex}
Consider the 3-cyclic matrix
\[A = \begin{bmatrix}
    0 & A_{12} & 0\\
    0 & 0 & A_{23}\\
    A_{31} & 0 & 0
\end{bmatrix}\]
where $A_{12}$ is the all-ones matrix of order four, $A_{31} = I_4$, and 
\[
A_{23} = \begin{bmatrix}
1 & 0 & 0 & 0\\
1 & 0 & 0 & 0\\
1 & 0 & 0 & 0\\
0 & 0 & 0 & 1\\
\end{bmatrix}. 
\]
The \verb+jordan+ command in MATLAB reveals that a Jordan canonical form of $A$ is 
\[
\left[
\begin{array}
{*{3}{c}|*{2}{c}|*{2}{c}|*{1}{c}|*{1}{c}|*{3}{c}}
0 & 1 & 0 & 0 & 0 & 0 & 0 & 0 & 0 & 0 & 0 & 0   \\ 
0 & 0 & 1 & 0 & 0 & 0 & 0 & 0 & 0 & 0 & 0 & 0   \\ 
0 & 0 & 0 & 0 & 0 & 0 & 0 & 0 & 0 & 0 & 0 & 0   \\ 
\hline                                          
0 & 0 & 0 & 0 & 1 & 0 & 0 & 0 & 0 & 0 & 0 & 0   \\ 
0 & 0 & 0 & 0 & 0 & 0 & 0 & 0 & 0 & 0 & 0 & 0   \\ 
\hline
0 & 0 & 0 & 0 & 0 & 0 & 1 & 0 & 0 & 0 & 0 & 0   \\ 
0 & 0 & 0 & 0 & 0 & 0 & 0 & 0 & 0 & 0 & 0 & 0   \\ 
\hline
0 & 0 & 0 & 0 & 0 & 0 & 0 & 0 & 0 & 0 & 0 & 0   \\ 
\hline
0 & 0 & 0 & 0 & 0 & 0 & 0 & 0 & 0 & 0 & 0 & 0   \\ 
\hline
0 & 0 & 0 & 0 & 0 & 0 & 0 & 0 & 0 & 2^{2/3} & 0 & 0   \\ 
\hline
0 & 0 & 0 & 0 & 0 & 0 & 0 & 0 & 0 & 0 & 2^{2/3}\omega & 0   \\ 
\hline
0 & 0 & 0 & 0 & 0 & 0 & 0 & 0 & 0 & 0 & 0 & 2^{2/3}\omega^2      
\end{array} \right].
\]
A straightforward calculation reveals that
\[ 
B_1 = B_3 = \begin{bmatrix}
3 & 0 & 0 & 1\\
3 & 0 & 0 & 1\\
3 & 0 & 0 & 1\\
3 & 0 & 0 & 1
\end{bmatrix} \]
and  
\[ 
B_2 = 
\begin{bmatrix}
1 & 1 & 1 & 1\\
1 & 1 & 1 & 1\\
1 & 1 & 1 & 1\\
1 & 1 & 1 & 1
\end{bmatrix}. \]

If 
\[ 
x = \begin{bmatrix}
-1/3 & 0 & 0 & 1
\end{bmatrix}^\top, \]
\[ 
y = \begin{bmatrix}
0 & 1 & 0 & 0
\end{bmatrix}^\top, \]
and 
\[ z= \begin{bmatrix}
0 & 0 & 1 & 0
\end{bmatrix}^\top, \]
then $\mathcal{B}_1 = \{x,y,z\}$ is a basis for $\nullspace(B_1)$. It can then be shown that $Av_x\neq0, A^2v_x\neq0$, and $A^3v_x=0$ while $A^2v_y=0$ and $A^2v_z=0$ with $Av_y\neq0$ and $Av_z\neq0$. As a consequence of Theorem \ref{thm:zero-j-chains}, the matrix $B_1$ contributes singular Jordan blocks of size three and two to the Jordan canonical form of $A$. 

On the other hand, the matrices $B_1$ and $B_3$ share the same null space but the vectors $v_x, v_y$ and $v_z$ corresponding to $B_3$ differ from those of $B_1$ in the positions of their non-zero entries and we find that $B_3$ contributes Jordan chains of length one and two. The basis of $\nullspace (B_2)$ produces three chains of length one.  

Thus, there is potential for redundancy in producing Jordan chains via Theorem \ref{thm:zero-j-chains}. However, calculating the \emph{Weyr characteristic} of $A$ with respect to zero (see, e.g., Horn and Johnson \cite[Lemma 3.1.18]{hj2013}) in conjunction with Theorem \ref{thm:zero-j-chains} yields an accurate procedure. 
\end{ex}

\section{Acknowledgements}

The second author thanks Daniel Szyld for his talk at the 2019 Meeting of the International Linear Algebra Society which provided the impetus for this work and Michael Tsatsomeros for pointing out that Lemma 4.1 of McDonald and Paparella \cite{mp2016} generalized the result for bipartite graphs. 

\bibliographystyle{abbrv}
\bibliography{chains}
\end{document}